\newtheorem{theorem}{Theorem}[section]
\newtheorem{proposition}[theorem]{Proposition}
\newtheorem{corollary}[theorem]{Corollary}
\newtheorem{definition}[theorem]{Definition}
\newtheorem{example}[theorem]{Example}
\newtheorem{remark}[theorem]{Remark}
\title{Expanding \v{C}ech cohomology for quantales}
\author[1]{Ana Luiza Tenório}
\author[2]{Peter Arndt}
\author[3]{Hugo Luiz Mariano}
\affil[1]{Department of Applied Mathematics, University of Rio de Janeiro, \texttt{anatenorio@im.ufrj.br}}
\affil[2]{Heinrich-Heine-Universität Düsseldorf, \texttt{Peter.Arndt@hhu.de}}
\affil[3]{Department of Mathematics, University of São Paulo, \texttt{hugomar@ime.usp.br}}
\begin{document}
\maketitle
\begin{abstract}{
We expand \v{C}ech cohomology of a topological space $X$ with values in a presheaf on $X$ to \v{C}ech cohomology of a commutative ring with unity $R$ with values in a presheaf on $R$. The strategy is to observe that both the set of open subsets of $X$ and the set of ideals of $R$ provide examples of a (semicartesian) quantale. We study a particular pair of (adjoint) functors $(\theta, \tau)$ between the quantale of open subsets of $X$ and the quantale of ideals of $C(X)$, the ring of real-valued continuous functions on $X$. This leads to the main result of this paper: the $q$th \v{C}ech cohomology groups of $X$ with values on the constant sheaf $F$ on $X$ is isomorphic to the $q$th \v{C}ech cohomology groups of the ring $C(X)$ with values on a sheaf $F \circ \tau$ on $C(X)$.  
}
\end{abstract}

\keywords{sheaves \and quantales \and \v{C}ech cohomology \and rings}

\section{Introduction}
In this paper, we present a simple yet powerful method to answer the following question:\textit{ how we define \v{C}ech cohomology for $R$ a commutative ring with unity with values in a (pre)sheaf on $R$?} The method is simple because we use \v{C}ech cohomology for a topological space $X$ with values in a presheaf in $X$ and observe that both the open subsets of $X$ and the ideals of $R$ form a structure called quantale -- a complete lattice equipped with a multiplication that distributes over arbitrary joins, then generalizing the situation when binary meets distribute over arbitrary joins, which gives \textit{locales}. In this way, we define \v{C}ech cohomology for an element in a quantale in a way that we are just replacing the intersections between open subsets of $X$ by a more general binary operation that encompass the multiplication of ideals of $R$. In symbols, the \v{C}ech cochain complex in degree $q$ for a cover of $\{U_i\}_{i \in I}$ in a quantale $Q$ with values in a presheaf $F$ is given by 
$$ \prod_{i_0<...<i_q}F(U_{i_0} \odot \dots \odot U_{i_q}),$$
where $\odot$ is the multiplication of the quantale.

The significance of the method relies in its ability to explore connections between the \v{C}ech cohomology groups of different quantales. We have found an example of this phenomenon by defining a morphism from the quantale of open subsets of a topological space $X$ to the quantale of the ideals of the ring $C(X)$  of continuous real valued functions on $X$ and then proving (Theorem \ref{teo:iso_of_cohomology_of_space}) that the $q$th \v{C}ech cohomology group for $X$ with values in a constant sheaf on $X$ is isomorphic to the $q$th \v{C}ech cohomology group for $C(X)$ with values in constant sheaf on $C(X)$. We argue that such isomorphism is a tool to relate topological properties of $X$ with algebraic properties of $C(X)$. We have also identified properties between arbitrary (semicartesian and commutative) quantales that will induce an isomorphism, for each degree, in the cohomological level (Theorem \ref{teo:iso_cohomology_any_quantale}).  

Our starting question is relevant because it may provide a new technique to study rings. On one hand recall that $X$ is connected if and only if $C(X)$ only has trivial idempotent elements. On the other hand, recall that the dimension of the de Rham cohomology $H^0_{dR}(X)$, which is isomorphic to the \v{C}ech cohomology $\check{H}^0(X, \mathbb R)$ if $X$ is a smooth manifold, corresponds to the  number of connected components of $X$. Therefore, we have an indication that the \v{C}ech cohomology group for $R$ in degree $0$ is providing the existence of non-trivial idempotent elements in $R$, perhaps, even counting them. We hope that further developments of \v{C}ech cohomology for quantales will bring novel algebro-geometric results and provide methods to study commutative and non-commutative rings.

\textit{Structure of the paper:} In Section \ref{Sec:sheaves_rings}, we introduce sheaves on quantales as defined in \cite{luiza2024sheaves}. In Section\ref{sec:base_change}, we present a morphism of quantales that we call \textit{geometric morphism}, based on the nomenclature for sheaves on locales. This is important to discuss a process of base change and, in particular, to obtain sheaves on quantales from well-known sheaves on locales. The morphism between open subsets of $X$ and ideals of $C(X)$ is also developed in this section. Finally, the last section is devoted to the \v{C}ech cohomology for quantales and its interpretations.

\begin{remark}
    \v{C}ech cohomology is also used for a site $(C,J)$ where $C$ is a category with pullbacks and $J$ is a Grothendieck (pre)topology with values in a sheaf for that site. In this framework, the intersection of open subsets of a topological space is replaced by pullback of objects that form a cover in the sense of a Grothendieck pretopology. In \cite{luiza2024sheaves} we proved that the category of sheaves on a quantale is not always a topos, therefore, the notion of cover is not compatible with the one provided by Grothendieck pretopologies. In \cite{tenório2024grothendieck}, we provided \textbf{Grothendieck prelopologies} as a generalization of Grothendieck pretolopologies in a way that the considered notion of cover in a quantale is an example of a Grothendieck prelopology. 
\end{remark}

\section{Sheaves on rings}\label{Sec:sheaves_rings}

Let $X$ be a topological space. Denote by $\mathcal{O}(X)$ the poset category of open subsets of $X$. A sheaf on $X$ is a presheaf $F:\mathcal{O}(X)^{op} \to Set$ such that for any open cover $U = \bigcup_{i \in I}U_i$ we have: if $\{s_i \in F(U_i)\}$ is a family such that $(s_i)|_{U_i \cap U_j} = (s_j)|_{U_i \cap U_j}$, for all $i,j \in I$, then for each $i \in A$ there is a unique $s \in F(U)$ such that $s|_{U_i} = s_i$.

For a commutative ring with unity $R$ denote $\mathcal{I}(R)$ the poset category of ideals of $R$. Now, instead of open sets $U$ we will have ideals $\mathscr{I}$ and instead of intersection of open subsets we take the multiplication of ideals. In this context, we define a sheaf on $R$ as a presheaf $F:\mathcal{I}(R)^{op} \to Set$ such that for any cover $\mathscr{I} = \sum_{i \in I}\mathscr{I}_i$ we have: if $\{s_i \in F(\mathscr{I}_i)\}$ is a family such that $(s_i)|_{\mathscr{I}_i \centerdot \mathscr{I}_j} = (s_j)|_{\mathscr{I}_i \centerdot \mathscr{I}_j}$, for all $i,j \in A$, then for each $i \in I$ there is a unique $s \in F(I)$ such that $s|_{\mathscr{I}_i} = s_i$.

If we want to describe the above two notions of sheaves using the same framework, we will need quantales. 
\begin{definition}
A \textbf{quantale} $Q$ is complete lattice equipped with an associative operation $\odot: Q \times Q \to Q$, called multiplication, such that 
\begin{align*}
    a \odot (\bigvee\limits_{i \in I}b_i) = \bigvee\limits_{i \in I}(a \odot b_i) \enspace \mbox{ and } \enspace
    & (\bigvee\limits_{i \in I}b_i)  \odot a = \bigvee\limits_{i \in I}(b_i \odot a)
\end{align*}
for all $a\in  Q$ and $\{b_i\}_{i\in I} \subseteq Q$.
\end{definition}

When $\odot$ is the meet $\wedge$, the quantale is a locale. Note that $\mathcal{O}(X)$ is a locale where the order is given by the inclusion, the binary meet is the intersection, and the joins are unions.  The poset $\mathcal{I}(R)$ is a quantale with order given by the inclusion, binary meets by intersections, joins by sums, and multiplication by multiplication of ideals. These quantales share the property of being \textbf{semicartesian}, that is, for any $a, b \in Q$, $a\odot b \leq a,b$. 

In \cite{luiza2024sheaves}, we proposed the following definition of a sheaf on a semicartesian quantale: 
\begin{definition}\label{sheaf_as_functor}
A presheaf $F: Q^{op}\to Set$ is a \textbf{sheaf} if for any cover $u = \bigvee\limits_{i\in I}u_i$ of any element $u \in Q$, the following diagram is an equalizer
\begin{center}
     \begin{tikzcd}
F (u) \arrow[r, "e"] & \prod\limits_{i\in I}F (u_i) \arrow[r, "p", shift left=1 ex] 
\arrow[r, "q"', shift right=0.5 ex]  & {\prod\limits_{(i,j) \in I \times I}F (u_i \odot u_j)}
\end{tikzcd}
 \end{center}
  where:
 \begin{enumerate}
     \item $e(t) = \{t_{|_{u_i}} \enspace | \enspace i \in I\}, \enspace t \in F (u)$ 
     \item     $p((t_k)_{ k \in I}) = (t_{i_{|_{u_i \odot u_j}}})_{(i,j)\in I\times I}$ \\ $q((t_k)_{k \in I}) = (t_{j_{|_{u_i \odot u_j}}})_{(i,j)\in I\times I}, \enspace (t_k)_{k \in I} \in \prod\limits_{k\in I}F (u_k)$
 \end{enumerate}
\end{definition}

If $Q = \mathcal{I}(X)$ this is precisely the notion of sheaves on $R$ that we presented above. If $Q$ is a locale, then we obtain the well-known notion of sheaves on a locale. In particular, for $Q = \mathcal{O}(X)$ this is precisely the usual notion of sheaves on a topological space $X$, which we also described above.

Note that we could have that both the multiplication and the meet distribute over the arbitrary joins, when this happens we say that the quantale has a localic structure. 



\section{Base Change}\label{sec:base_change}

\begin{definition}
A geometric morphism is a pair of adjoint functors 
\begin{tikzcd}
	{\mathcal{Q}} & {\mathcal{Q}'}
	\arrow["{f_*}"', shift right=1, from=1-1, to=1-2]
	\arrow["{f^*}"', shift right=1, from=1-2, to=1-1]
\end{tikzcd}  such that
\begin{enumerate}
    \item $f^*$ preserves arbitrary sups and 1;
    \item $f^*$  weakly preserves the  multiplication, i.e., $f^*(p)\odot f^*(q) \leq f^*(p\odot' q), \forall p,q \in \mathcal{Q}$. 
\end{enumerate}
\end{definition}

\begin{remark}
In \cite{rosenthal_quantales} morphisms of quantales are defined as maps that preserve arbitrary sup and the multiplication.
\end{remark}
\begin{remark}
The right adjoint of $f^*$ comes from the Adjoint Functor Theorem: since $f^*$ preserves sups, it has a right adjoint $f_* : \mathcal{Q} \to \mathcal{Q}'$. We call $f_*$ direct image, and $f^*$ is called inverse image.
\end{remark}

Since $f_*$ has a left adjoint, the Adjoint Functor Theorem guarantees that $f_*$ preserves all limits. In particular, $f_*$ preserves meets and $1$. 

Even though we are considering semicartesian quantales instead of idempotent quantales, the above definition is exactly the same of a morphism of quantales given in \cite{borceux1986quantales}. As stated there, if the quantale is a locale this definition coincides with the classical definition of a morphisms of locales: since $f^*$ preserves arbitrary sups, it is a functor; consequently, we have $f^*(p \wedge q) \leq f^*(p)$ and $f^*(p \wedge q) \leq f^*(q)$. Therefore, $f^*(p \wedge q) \leq f^*(p) \wedge f^*(q)$. The definition provides the other side of the inequality.

\begin{example}
The inclusion $Idem(Q) \to Q$ is a geometric morphism with right adjoint given by $(-)^-: Q \to Idem(Q)$, where $ q^{-}   := \bigvee \{p \in Idem(Q) : p\leq q \odot p\} $ is what we call an idempotent approximation and $Idem(Q)$ is the locale of idempotent elements of $Q$. This was proved for commutative (and semicartesian) quantales in \cite[Proposition 3]{luiza2024sheaves}.
\end{example}

Next, we define what is called a \textit{strict morphism of quantales} in \cite{borceux1986quantales}.

\begin{definition}
A \textbf{strong geometric morphism of quantales} is a geometric morphism of quantales where $f^*$ preserves the multiplication. In other words, the other inequality holds. 
\end{definition}

\begin{example} \label{strgeomorp-ex}
\begin{enumerate}
    \item The inclusion $Idem(Q) \to Q$.
    \item The idempotent approximation $(-)^-: Q \to Idem(Q)$ is a strong geometric morphism if $Q$ is a geometric quantale, that is, if
$\bigvee_{i\in I} q_i^- = (\bigvee_{i\in I} q_i)^-$, for each $\{q_i : i \in I\} \subseteq Q$. 
    \item A projection map $p_i : \prod_{i\in I}Q_i   \to Q_i$ preserves sups, unit and multiplication.
    \item The inclusions $[a^-, \top] \to Q$ preserves sups, unit and multiplication, for all $a \in Q$.
    \item Every surjective 
    homomorphism $f: R \to S$ of commutative and unital rings induces a strong geometric morphism $f^* : \mathcal{I}(R) \to  \mathcal{I}(S)$ given by $f^*(J) = \langle f(J) \rangle$, the ideal generated by $f(J)$. Indeed, notice that $1 = R$ in $\mathcal{I}(R)$ and the surjectivity gives that $\langle f(R)\rangle = \langle S \rangle,$ so $f^*$ preserves $1$. To show it preserves arbitrary sups and the multiplication, we do not need the surjective condition. In fact, this is stated in  \cite[2.3(3)]{rosenthal_quantales}.  
\end{enumerate}

\end{example}

In sheaf theory the sheafification functor is a crucial construction. It is the exact left adjoint functor $a :PSh(C) \to Sh(C,J)$ of the inclusion functor $Sh(C,J) \to PSh(C)$, where $C$ is a category equipped with a Grothendieck topology $J$. Moreover, $a$ preserves all finite limits. If $D$ is a reflective subcategory of a category of presheaves in which the left adjoint functor of the inclusion preserves all finite limits, then $D$ is a Grothendieck topos. In \cite[Chapter 4.1]{tenório2024grothendieck},  we presented a general framework to show that $Sh(Q)$ is a reflective subcategory of $PSh(Q)$ (it follows from $Sh(Q)$ being a $\lambda$-orthogonality class of $PSh(Q)$, for a regular cardinal $\lambda$). Since $Sh(Q)$ may not be a Grothendieck topos \cite{luiza2024sheaves}, the sheafification in our context may not preserve all finite limits. In the next result we will use this weaker sheafification functor $a : PSh(Q) \to Sh(Q)$. The fact that it is left adjoint to the inclusion from sheaves to presheaves is the only property that we need.

\begin{theorem}\label{theorem:change_of_basis}
A geometric morphism $f: Q \to Q'$ induces an 
adjunction in the respective category of sheaves. More precisely, the pair of adjoint functors 
\begin{tikzcd}
	{\mathcal{Q}} & {\mathcal{Q}'}
	\arrow["{f_*}"', shift right=1, from=1-1, to=1-2]
	\arrow["{f^*}"', shift right=1, from=1-2, to=1-1]
\end{tikzcd} induces a pair 
\begin{tikzcd}
	{Sh(\mathcal{Q})} & {Sh(\mathcal{Q}')}
	\arrow["{\phi_*}"', shift right=1, from=1-1, to=1-2]
	\arrow["{\phi^*}"', shift right=1, from=1-2, to=1-1]
\end{tikzcd} where $\phi^*$ is left adjoint to $\phi_*$.
\end{theorem}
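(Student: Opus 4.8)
The plan is to build everything from a single precomposition functor and then invoke formal adjunction arguments, isolating the one nontrivial verification: that precomposition sends sheaves to sheaves.

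First I would define, at the level of presheaves, the functor $\Phi_* \colon PSh(Q) \to PSh(Q')$ by $\Phi_*(F) = F \circ (f^*)^{op}$, i.e. $(\Phi_* F)(v) = F(f^*(v))$ for $v \in Q'$; this is well defined because $f^*$ preserves sups, hence is monotone, hence is a functor $Q' \to Q$. Being a precomposition functor between presheaf categories, $\Phi_*$ automatically has a left adjoint $\Phi^{\dagger} \colon PSh(Q') \to PSh(Q)$, namely the left Kan extension along $(f^*)^{op}$, so that $\Phi^{\dagger} \dashv \Phi_*$ holds at the presheaf level with no hypotheses on $f$ at all.

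Next comes the crux, which I expect to be the main obstacle: showing that $\Phi_*$ carries sheaves on $Q$ to sheaves on $Q'$, thereby restricting to $\phi_* \colon Sh(Q) \to Sh(Q')$. Fix a sheaf $F$ on $Q$ and a cover $v = \bigvee_{i} v_i$ in $Q'$. Since $f^*$ preserves arbitrary sups, $f^*(v) = \bigvee_i f^*(v_i)$ is a cover in $Q$, and the equalizer diagram of Definition \ref{sheaf_as_functor} for $F$ along this cover is exact. The difficulty is that the sheaf condition for $\Phi_* F$ tests matching families in the sets $F(f^*(v_i \odot' v_j))$, whereas exactness for $F$ tests agreement in $F(f^*(v_i) \odot f^*(v_j))$, and these do not coincide. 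The bridge is supplied by the two defining properties of a geometric morphism together with semicartesianness: weak preservation of multiplication gives $f^*(v_i) \odot f^*(v_j) \leq f^*(v_i \odot' v_j)$, while $v_i \odot' v_j \leq v_i$ and monotonicity of $f^*$ give $f^*(v_i \odot' v_j) \leq f^*(v_i)$. Hence the restriction $F(f^*(v_i)) \to F(f^*(v_i) \odot f^*(v_j))$ factors through $F(f^*(v_i \odot' v_j))$, so any family matching in the $F(f^*(v_i \odot' v_j))$ already matches in the $F(f^*(v_i) \odot f^*(v_j))$. The sheaf property of $F$ then produces a unique amalgamation in $F(f^*(v)) = (\Phi_* F)(v)$, and injectivity together with uniqueness transfer verbatim because the restriction maps of $\Phi_* F$ are literally those of $F$. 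This shows $\Phi_* F$ is a sheaf.

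Finally I would assemble the adjunction. Define $\phi^{*} \colon Sh(Q') \to Sh(Q)$ as the composite $a \circ \Phi^{\dagger} \circ \iota'$, where $\iota' \colon Sh(Q') \hookrightarrow PSh(Q')$ is the inclusion and $a \colon PSh(Q) \to Sh(Q)$ is the weak sheafification, left adjoint to the inclusion $\iota \colon Sh(Q) \hookrightarrow PSh(Q)$ as recalled before the statement. For sheaves $G$ on $Q'$ and $F$ on $Q$ I would chain natural isomorphisms: by $a \dashv \iota$, $\mathrm{Hom}_{Sh(Q)}(\phi^* G, F) \cong \mathrm{Hom}_{PSh(Q)}(\Phi^{\dagger}\iota' G, \iota F)$; by the presheaf adjunction $\Phi^{\dagger} \dashv \Phi_*$, this equals $\mathrm{Hom}_{PSh(Q')}(\iota' G, \Phi_* \iota F)$; and since $\Phi_* \iota F = \iota' \phi_* F$ is a sheaf by the previous step and $Sh(Q')$ is full in $PSh(Q')$, this equals $\mathrm{Hom}_{Sh(Q')}(G, \phi_* F)$. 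Naturality in both variables is immediate, yielding $\phi^* \dashv \phi_*$. Everything here is formal once the sheaf-preservation of $\Phi_*$ is in hand; that reconciliation of the two double-indexed products is exactly where the geometric-morphism axioms and the semicartesian inequality $a \odot b \leq a,b$ are indispensable, since without weak multiplicativity the relevant restriction maps would not factor and $\Phi_* F$ could fail the gluing condition.
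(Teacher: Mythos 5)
Your proposal is correct and follows essentially the same route as the paper's proof: $\phi_*$ is precomposition with $f^*$, sheaf-preservation is verified via sup-preservation (covers go to covers) and the weak multiplicativity inequality $f^*(v_i)\odot f^*(v_j)\leq f^*(v_i\odot' v_j)$, and $\phi^*$ is assembled as sheafification $\circ$ left Kan extension $\circ$ inclusion, with the adjunction obtained by the same chain of hom-set isomorphisms. If anything, your write-up is slightly more careful than the paper's at two points: you make explicit the factorization of the restriction map $F(f^*(v_i))\to F(f^*(v_i)\odot f^*(v_j))$ through $F(f^*(v_i\odot' v_j))$, which the paper only asserts implicitly, and your hom-set chain invokes fullness of $Sh(\mathcal{Q}')$ in $PSh(\mathcal{Q}')$ directly rather than passing through the sheafification $b$ as the paper does.
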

\begin{proof}
\textit{First step: find a $\phi_*$.}  

Define $\phi_* : Sh(\mathcal{Q}) \to Sh(\mathcal{Q}')$ by $\phi_*(F) = F \circ f^*$ and $\phi_*(\eta_u) = \eta_{f^*_u}$ for all $u \in \mathcal{Q}'$. We have to show that $F \circ f^*$ is a sheaf in $\mathcal{Q}'$. Take $u = \bigvee\limits_{i \in I }u_i$ a cover in $\mathcal{Q}'$ and a compatible family $(s_i \in F\circ f^*(u_i))_{i\in I}$ in $F\circ f^*$. This compatible family can be written as $(s_i \in F\circ f)(u_i \odot' u_j)F(f^*(u_i)))_{i\in I}$ and it remains as a compatible family in $F$ because
\begin{enumerate}
    \item  $f^*(u) = f^*(\bigvee\limits_{i \in I }u_i) = \bigvee\limits_{i \in I }f^*(u_i)$ is a cover;
    \item Since $f^*(p)\odot f^*(q) \leq f^*(p\odot' q), \forall p,q \in Q'$, we have that  ${s_i}_{|_{u_i \odot' u_j}} = {s_j}_{|_{u_i \odot' u_j}}  $ in $F\circ f)(u_i \odot' u_j)$ implies $${s_i}_{|_{f^*(u_i) \odot f^*(u_j)}}  = {s_j}_{|_{f^*(u_i) \odot f^*(u_j)}}$$
    in $ F(f(u_i) \odot f(u_j)).$
\end{enumerate}

By the sheaf condition on $F$, there is a unique $s \in F(f^*(u))$ such that $s_{|_{f^*(u_i)}} = s_i$, for all $i \in I$ in $F$. So, in $F \circ f^*$, $s_{|_{u_i}} = s_i$, for all $i \in I$, as desired.

\textit{Second step: find $\phi^*$ left adjoint to $\phi_*$.} 

Have the following diagram in mind: 
\[\begin{tikzcd}
	{PSh(\mathcal{Q}')} & {PSh(\mathcal{Q})} \\
	{Sh(\mathcal{Q}')} & {Sh(\mathcal{Q})}
	\arrow["{\phi^*}", shift left=1, from=2-1, to=2-2]
	\arrow["i", shift left=2, from=2-2, to=1-2]
	\arrow["a", shift left=1, from=1-2, to=2-2]
	\arrow["b", shift left=2, from=1-1, to=2-1]
	\arrow["j", shift left=1, from=2-1, to=1-1]
	\arrow["{\phi^-}", shift left=1, from=1-1, to=1-2]
	\arrow["{\phi^+}", shift left=1, from=1-2, to=1-1]
	\arrow["{\phi_*}", shift left=1, from=2-2, to=2-1]
\end{tikzcd}\]

where $i$ and $j$ are inclusions (reflections) with left adjoints (the sheafifications) $a$ and $b$, respectively. The functor $\phi^+$ is the precomposition with $f^*$, as we did to define $\phi_*$, and we take $\phi^- = Lan_{f^*}P$, i.e.,  $\phi^-$ is the left Kan extension of a presheaf $P: \mathcal{Q}' \to Set$ along $f^*$. Since $\phi^+$ is precomposition with $f^*$, $\phi^-$ is left adjoint to  $\phi^+$.

Define $\phi^* = a \circ \phi^- \circ j$. For any $F$ sheaf in $Sh(\mathcal{Q})$ and any $G$ sheaf in  $Sh(\mathcal{Q}')$ we have:
\begin{align*}
    Hom_{Sh(\mathcal{Q})}(a \circ \phi^- \circ j(G),F) &\cong  Hom_{PSh(\mathcal{Q})}( \phi^- \circ j(G),i(F)) 
    \\
    &\cong Hom_{PSh(\mathcal{Q}')}(j(G),\phi^+\circ i(F)) 
    \\
    & \cong Hom_{PSh(\mathcal{Q}')}(j(G),j\circ \phi^*(F)) 
    \\
    &\cong Hom_{Sh(\mathcal{Q}')}(G,b \circ j \circ \phi_*(F)) 
    \\
    &\cong Hom_{Sh(\mathcal{Q}')}(G,\phi_*(F)) 
\end{align*}

So $\phi^*$ is left adjoint to $\phi_*$.

\end{proof}

We have sufficient conditions for an equivalence:
\begin{proposition}
Consider a pair of adjoint functors 
\begin{tikzcd}
	{Q} & {Q'}
	\arrow["{f_*}"', shift right=1, from=1-1, to=1-2]
	\arrow["{f^*}"', shift right=1, from=1-2, to=1-1]
\end{tikzcd} that induces the adjunction \begin{tikzcd}
	{Sh(Q)} & {Sh(Q')}
	\arrow["{\phi_*}"', shift right=1, from=1-1, to=1-2]
	\arrow["{\phi^*}"', shift right=1, from=1-2, to=1-1]
\end{tikzcd} where $\phi^*$ is left adjoint to $\phi_*$. 
Suppose that any $u \in Q$ is of the form $u = f^*(u')$ for some $u' \in Q'$, then $\phi_*$ is a full and faithful functor. Moreover, if $f_*$ preserves sup and multiplication, and $f_* \circ f^* = id_{Q'}$, then $\phi_*$ is dense and therefore it gives an equivalence between $Sh(Q)$ and $Sh(Q')$.
\end{proposition}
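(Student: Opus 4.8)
The plan is to exploit the explicit description $\phi_*(F) = F \circ f^*$ obtained in Theorem \ref{theorem:change_of_basis} and to verify the three defining properties of an equivalence---faithfulness, fullness and essential surjectivity---by hand, rather than through the abstract formula $\phi^* = a \circ \phi^- \circ j$ for the left adjoint, which would be cumbersome to unwind.

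Faithfulness is immediate. A morphism $\phi_*(\alpha) \colon F\circ f^* \to G \circ f^*$ has components $\phi_*(\alpha)_{u'} = \alpha_{f^*(u')}$. Since by hypothesis every $u \in Q$ equals $f^*(u')$ for some $u' \in Q'$, the full family of components $\alpha_{f^*(u')}$ already determines every component $\alpha_u$; hence $\phi_*(\alpha) = \phi_*(\beta)$ forces $\alpha = \beta$.

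Fullness is the heart of the argument, and the step I expect to be the main obstacle, because the assignment of a $u'$ with $f^*(u') = u$ is not canonical. Given a natural transformation $\gamma \colon F\circ f^* \to G\circ f^*$ I would set $\alpha_u := \gamma_{u'}$ for a chosen $u'$ with $f^*(u') = u$, and the crux is well-definedness. If $f^*(u') = f^*(u'') = u$, then since $f^*$ preserves suprema (part of the geometric-morphism hypothesis) we get $f^*(u' \vee u'') = f^*(u') \vee f^*(u'') = u$, so the restriction map $(F\circ f^*)(u'\vee u'') \to (F\circ f^*)(u')$ and its $G$-analogue are identities; naturality of $\gamma$ along $u' \le u' \vee u''$ then yields $\gamma_{u'} = \gamma_{u'\vee u''} = \gamma_{u''}$, as needed. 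The same join trick delivers naturality of $\alpha$: for $v \le w$ in $Q$ with $v = f^*(v')$ and $w = f^*(w')$ one has $f^*(v' \vee w') = w$, and applying naturality of $\gamma$ along $v' \le v' \vee w'$ reproduces exactly the naturality square for $\alpha$ relating the restrictions $F(w) \to F(v)$ and $G(w) \to G(v)$; finally $\phi_*(\alpha) = \gamma$ holds by construction. (In effect this argument shows that the precomposition functor $\phi^+ \colon PSh(Q) \to PSh(Q')$ is full and faithful, and $\phi_*$ is merely its restriction to the full subcategories of sheaves.)

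For essential surjectivity (the asserted density) I would exhibit an explicit preimage. Given a sheaf $G$ on $Q'$, set $F := G\circ f_*$. First I must check that $F$ is a sheaf on $Q$; this mirrors the first step of Theorem \ref{theorem:change_of_basis} with $f_*$ in place of $f^*$: since $f_*$ preserves suprema, a cover $u = \bigvee_i u_i$ in $Q$ is sent to a cover $f_*(u) = \bigvee_i f_*(u_i)$ in $Q'$, and since $f_*$ preserves the multiplication, $f_*(u_i \odot u_j) = f_*(u_i) \odot' f_*(u_j)$, so a compatible family for $G\circ f_*$ is precisely a compatible family for $G$, whence the equalizer condition for $G$ transports to one for $G \circ f_*$. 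Then $\phi_*(F) = F\circ f^* = G \circ f_* \circ f^* = G \circ \mathrm{id}_{Q'} = G$, using $f_*\circ f^* = \mathrm{id}_{Q'}$, so $G$ lies in the image of $\phi_*$ on the nose. Combined with fullness and faithfulness, this makes $\phi_*$ an equivalence $Sh(Q) \simeq Sh(Q')$.
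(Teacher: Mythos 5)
Your proposal is correct and follows essentially the same route as the paper's own proof: faithfulness and fullness via the hypothesis that every $u \in Q$ is of the form $f^*(u')$, and density by exhibiting $G \circ f_*$ as an explicit preimage, checked to be a sheaf using that $f_*$ preserves suprema and multiplication, with $f_* \circ f^* = \mathrm{id}_{Q'}$ giving $\phi_*(G \circ f_*) \cong G$. In fact your treatment of fullness is more careful than the paper's: the published proof simply sets $\varphi_{f^*(u')} := \psi_{u'}$ without addressing well-definedness when $f^*(u') = f^*(u'')$ or verifying naturality of $\varphi$, gaps which your join argument (using that $f^*$ preserves suprema, so $\gamma_{u'} = \gamma_{u' \vee u''} = \gamma_{u''}$) closes cleanly.
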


\begin{proof}
We begin by showing that $\phi_{*_{F,G}} : Hom_{Sh(Q)}(F,G) \to Hom_{Sh(Q')}(F\circ f^*,G\circ f^*)$ is surjective: Let $\psi :F\circ f^* \to G\circ f^* $ be a natural transformation. For each $u' \in Q'$, $\psi_{u'}: F(f^*(u')) \to G(f^*(u'))$, which is a natural transformation $\varphi_{f^*(u)} :  F(f^*(u')) \to G(f^*(u'))$. So $\psi = \phi_*(\varphi)$.

Now, whe check that $\phi_{*}$ is faithful: Take $\varphi, \psi \in Hom_{Sh(Q)}(F,G)$ such that $\phi_{*_{F,G}}(\varphi)_{u'} = \phi_{*_{F,G}}(\psi)_{u'}$, for all $u' \in Q'$. Then $\varphi_{f^*(u')} = \psi_{f^*(u')}$. Therefore, $\varphi_u = \psi_{u}$, for all $u \in Q$, as desired.

To prove that $\phi_*$ is dense we need to use $f_*$ and suppose that it preserves sup and multiplication.

    Let $F$ be a sheaf on $Q'$. Note that $F \circ f_*$ is a sheaf on  $Q$: take $u = \bigvee_{i\in I} u_i$ a covering in $Q$. Then $\bigvee f_*(u_i) = f_*(u)$ is a covering.  
Let $(s_i \in F\circ f_*(u_i))_{i\in I}$ be a compatible family in $F \circ f_*$. So $s_{i_{|_{u_i \odot u_j}}} = s_{j_{|_{u_i \odot u_j}}}, \forall i,j \in I$. This implies that $$s_{i_{|_{f_*(u_i) \odot f_*(u_j)}}} = s_{i_{|_{f_*(u_i \odot u_j)}}} = s_{j_{|_{f_*(u_i \odot u_j)}}} = s_{j_{|_{f_*(u_i) \odot f_*(u_j)}}} $$
Therefore, this is a compatible family in $F$. Since $F$ is a sheaf, there is a unique $s \in F((f_*)(u))$ such that $s_{|_{f_*(u_i)}} = s_i,$ for all $i \in I.$ So, in $F \circ f_*$, $s_{|_{u_i}} = s_i$, for all $i \in I$. 

Finally, observe that $\phi_*(F\circ f_*)(u) = (F\circ f_*\circ f^*)(u') \cong F(u') $, for all $u' \in Q'$. So $\phi_*$ is dense.
\end{proof} 
\begin{remark}
    The extra conditions provide that $f^*$ is a homomorphism of quantales (preserve sups and multiplication) and then $f_*\circ f^* = id_{Q'}$ implies that $Q$ and $Q'$ are isomorphic quantales. In other words, in the conditions of the above theorem, it is expected to obtain an equivalence between $Sh(Q)$ and $Sh(Q')$.
\end{remark}

The importance of the above proposition is shown by the following applications -- the first and the last are already known:

\begin{itemize}
    \item If $f: X \to Y$ is a homeomorphism of topological spaces, then $f^*(U) = f(U)$ and $f_*(V) = f^{-1}(V)$ satisfy all the required conditions. So $Sh(X)$ is equivalent to $Sh(Y)$.
    \item If $f: R \to S$ is an isomorphism of commutative rings, then $f^*(I) = f(I)$ and $f_*(J) = f^{-1}(J)$ satisfy all the required conditions. So $Sh(R)$ is equivalent to $Sh(S)$.

    \item Any isomorphism $f^* : Q' \to Q$ between quantales satisfies the hypothesis. In particular, since there is an isomorphism $\mathbb{L} = ([0,\infty],+,\geq) \to  ([0,1,.,\leq]) = \mathbb{I}$ via the map $x \mapsto e^{-x}$, the categories $Sh(\mathbb{L})$ and $S(\mathbb{I})$ are equivalent.

    \item  Another interesting case of quantalic isomorphism is the localic isomorphism $D: Rad(R) \to \mathcal{O}(Spec(R))$, $I \mapsto D(I) =  \{J\,:\, J \mbox{ prime ideal of } R, I \nsubseteq J\} = \bigcup_{a \in I}D(a)$, where $D(a) = \{J\,:\, J \mbox{ prime ideal of } R, a \notin J\}$  \cite[Propostion 2.11.2]{borceux1994handbook3}. So  $Sh(\mathcal{O}(Spec(R)))$ and $Sh(Rad(R))$ are equivalent categories.
\end{itemize}

\begin{proposition}\label{prop:changebase_ring_continuous}
Let $X$ be a topological space that admits partition of unity subordinate to a cover (for example, paracompact Hausdorff spaces), and $C(X)$ the ring of all real-valued continuous functions on $X$. Then
\begin{enumerate}
    \item The functor 
\begin{align*}
  \tau\colon \mathcal{I}(C(X))  & \to \mathcal{O}(X) \\
  I & \mapsto
  \bigcup_{f \in I}f^{-1}(\mathbb{R}- \{0\})
\end{align*}
preserves arbitrary supremum, multiplication, and unity.

    \item The functor 
\begin{align*}
  \theta\colon \mathcal{O}(X)  & \to \mathcal{I}(C(X)) \\
  U & \mapsto
  \langle\{f \, : \, f\restriction_{X-U} \equiv 0 \}\rangle
\end{align*}
preserves arbitrary supremum
and unity.

    \item  The functor $\tau$ is left adjoint to $\theta$. 
\end{enumerate}
\end{proposition}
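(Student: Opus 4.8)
The plan is to take the adjunction (part 3) as the organising backbone, since it makes two of the four preservation claims automatic. First I would record the simplification that the generating set $\{f : f\restriction_{X-U} \equiv 0\}$ is already an ideal (it is closed under addition and under multiplication by arbitrary elements of $C(X)$), so that $\theta(U) = \{f \in C(X) : f\restriction_{X-U} \equiv 0\}$. With this in hand, the adjunction reduces to a Galois connection between the posets $\mathcal{I}(C(X))$ and $\mathcal{O}(X)$, and I would verify the single equivalence
$$\tau(I) \subseteq U \iff I \subseteq \theta(U),$$
which unwinds to the tautology that ``every $f \in I$ is nonzero only inside $U$'' is the same as ``every $f \in I$ vanishes on $X - U$.'' This proves part 3, and it formally yields that $\tau$, being a left adjoint, preserves all suprema, and that $\theta$, being a right adjoint, preserves all meets.

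The remaining algebraic facts I would dispatch by pointwise computation. For multiplication I would prove $\tau(I\cdot J) = \tau(I)\cap\tau(J)$ directly: if $x \in \tau(I)\cap\tau(J)$, choose $f\in I$, $g\in J$ with $f(x),g(x)\neq 0$, so $fg \in I\cdot J$ and $(fg)(x)\neq 0$; conversely, any $h = \sum_k f_k g_k \in I\cdot J$ with $h(x)\neq 0$ forces some $f_k(x)g_k(x)\neq 0$, whence $x\in\tau(I)\cap\tau(J)$. Note this step needs no hypothesis on $X$. For the units I would use that the constant function $1$ is nowhere zero, so $\tau(C(X)) = X$, and that the defining condition for $\theta(X)$ is vacuous, so $\theta(X) = C(X)$.

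The genuine obstacle is the claim that $\theta$ preserves suprema, which does \emph{not} follow formally, since a right adjoint preserves meets rather than joins. Here the inclusion $\sum_i \theta(U_i) \subseteq \theta(\bigcup_i U_i)$ is immediate from monotonicity, so the content lies entirely in the reverse inclusion, and this is where the partition-of-unity hypothesis enters. The plan is: given $f$ vanishing outside $\bigcup_i U_i$, take a partition of unity $\{\rho_i\}$ subordinate to the cover $\{U_i\}$ of the open set $\bigcup_i U_i$, set $f_i := \rho_i f$ (extended by $0$ off $\bigcup_i U_i$), check that each $f_i$ is continuous on $X$ and vanishes outside $U_i$, and recover $f = \sum_i f_i$. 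The delicate point — the step I expect to be hardest — is reconciling this decomposition with the join in $\mathcal{I}(C(X))$, which is the ideal sum and hence admits only \emph{finite} sums, whereas a partition of unity is merely locally finite. For finite covers the argument closes cleanly; for arbitrary covers one must argue with care that $f$ genuinely lies in the ideal generated by the $\theta(U_i)$, and it is precisely at this reconciliation that the partition-of-unity hypothesis must be exploited most carefully.
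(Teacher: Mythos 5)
Your reorganization is sound, and in two places genuinely cleaner than the paper's own proof: the paper verifies by hand that $\tau$ preserves suprema (via the observation that $(f_1+\cdots+f_n)(x)\neq 0$ forces some $f_j(x)\neq 0$, plus monotonicity), whereas you get this for free from the Galois connection once part 3 is established; and your observation that $\{f : f\restriction_{X-U}\equiv 0\}$ is already an ideal, so that $\theta(U)$ equals this set on the nose, eliminates the paper's repeated and slightly awkward handling of generators (elements written as $\sum_j \phi_j h_j$, with the remark that ``it does not change the verification''). Your treatments of the adjunction equivalence $\tau(I)\subseteq U \iff I \subseteq \theta(U)$, of multiplication, and of both units coincide in substance with the paper's.

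The step you flagged as the genuine obstacle is, however, worse than delicate: it fails, and the paper's own proof commits precisely the conflation you warned against. The paper writes, for a subordinate partition of unity $\{\sigma_i\}_{i\in I}$, that $g(x)=g(x)\cdot\sum_{i\in I}\sigma_i(x)=\sum_{i\in I}g_i(x)$ and concludes membership in $\sum_{i\in I}\theta(U_i)$ --- but the join in $\mathcal{I}(C(X))$ is the algebraic ideal sum, whose elements are \emph{finite} sums, while the partition-of-unity decomposition is only locally finite. For infinite covers the reverse inclusion $\theta\bigl(\bigcup_i U_i\bigr)\subseteq \sum_i\theta(U_i)$ is false: take $X=\mathbb{R}$ (paracompact Hausdorff, so within the hypotheses) and $U_n=(n,n+2)$ for $n\in\mathbb{Z}$. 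Every element of $\sum_n\theta(U_n)$ is a finite sum of functions each vanishing outside some $U_n$, hence vanishes outside a bounded set, whereas $\theta(\mathbb{R})=C(\mathbb{R})$ contains the constant function $1$. So part 2 holds only for finite joins --- your remark that ``for finite covers the argument closes cleanly'' is the full extent of the truth --- or under hypotheses guaranteeing finite subordinate partitions of unity, e.g.\ $X$ compact; no more careful exploitation of the partition-of-unity hypothesis can rescue the general case. Your proposal is therefore incomplete at this point, but honestly so, while the paper's argument is silently wrong at the same step; note also that the defect propagates, since the proof of Theorem \ref{teo:iso_of_cohomology_of_space} invokes exactly this preservation to conclude that $\theta$ carries a covering of $X$ to a covering of $C(X)$.
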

\begin{proof}
\begin{enumerate}
    \item 

    To check that $\tau$ preserves supremum, observe that if $x \in (f_1 + ... + f_n)^{-1}(\mathbb{R}- \{0\})$ then $f_j(x) \neq 0$ for some $j \in \{1,...,n\}$ and so $x \in \bigcup_{j \in \{1,...,n\}}f_j^{-1}(\mathbb{R}- \{0\})$. Therefore:

    $$\tau(\Sigma_{j\in J}I_j) = \bigcup_{f \in \Sigma_{j \in J} I_j} f^{-1}(\mathbb{R}- \{0\}) \subseteq \bigcup_{j \in J}\bigcup_{f_j \in I_j}f_j^{-1}(\mathbb{R}- \{0\})  =\bigcup_{j\in J}\tau(I_j) $$

    Since $\tau$ is increasing (if $I \subseteq K$ and $x \in \bigcup_{f \in I}f^{-1}(\mathbb{R}- \{0\}) = \tau(I)$ then $f \in K$ and $x \in \tau(J)$),  we have that $\tau(I_j) \subseteq \tau(\Sigma_{j\in J}I_j)$ for all $j \in J$. By definition of supremum, $\bigcup_{j\in J}\tau(I_j) \subseteq \tau(\Sigma_{j\in J}I_j)$.


    The fact that  $\tau$ is increasing also gives that given ideals $I$ and $J$, $\tau(IJ) \subseteq \tau(I) \cap \tau(J)$. On the other side, if $x \in  \tau(I) \cap \tau(J)$ then  $f(x) \neq 0$ and $g(x) \neq 0$, for $f \in I$ and $g \in J$. Then $fg(x) \neq 0$. Since $fg \in IJ$, we obtain that $x \in \tau(IJ)$. Therefore, $\tau$ preserves multiplication. 

    We also have $\tau(C(X)) = \bigcup_{f \in C(X)} f^{-1}(\mathbb{R}- \{0\}) = X$, since for all $x \in X$ there is some continuous function $f$ such that $f(x) \neq 0$, so $\tau$ preserves unity.

    \item Observe that 
    \begin{align*}
        \theta(\bigcup_{i\in I}U_i) &=  \langle\{f \, : \, f\restriction_{X-\bigcup_{i\in I}U_i} \equiv 0 \}\rangle
    \end{align*}
    
    Let $g \in \Sigma_{i\in I}\langle\{f \, : \, f\restriction_{X-U_i} \equiv 0 \}\rangle$. So $g = \Sigma_{i\in I}f_i$ where $f_i(x) = 0$ for all $x \notin U_i$. So $g(x) = \Sigma_{i\in I}f_i(x) = 0$ for all $x \notin U$. Actually, since we are dealing with an ideal generated by a set, each $f_i$  is a sum $\Sigma_{j\in \mathbb{N}}(\phi_{ij}.h_{ij})(x)$, where $\phi_{ij} \in C(X)$ and $h_{ij}(x) = 0$ for all $x \notin U_i$, but we write as above since it does not change the verification and it is easier to follow the argument. 

    For the other inclusion, if $U = \bigcup_{i\in I}U_i$ is a covering and $g \in \langle\{f \, : \, f\restriction_{X-U} \equiv 0 \}\rangle$, define $g_i = g . \sigma_i$ where $\{\sigma_i\}_{i\in I}$ is partition of unity subordinate to $\{U_i\}_{i\in I}$. Then $\sigma_i(x) = 0$ for all $x \notin U_i$, and thus $g_i(x) = 0$ for all $x \notin U_i$ and $$g(x) = g(x). 1 = g(x) . \Sigma_{i\in I}\sigma_i(x) = \Sigma_{i\in I}g_i(x) \in \Sigma_{i\in I}\langle\{f \, : \, f\restriction_{X-U_i} \equiv 0 \}\rangle.$$

    Note that $\theta$ preserves unity because every function in $C(X)$ when restricted to the empty-set is the empty function, and $f\restriction_{\emptyset} = 0$ vacuously holds. Thus, $\theta(X) = C(X)$

    \item  Suppose $\tau(I) \subseteq U$. Let $g \in I$, then $g^{-1}(\mathbb{R}- \{0\}) \subseteq \bigcup_{g \in I}g^{-1}(\mathbb{R}- \{0\}) \subseteq U$.

    If $x \notin U$, then $x \notin g^{-1}(\mathbb{R}- \{0\})$, thus $g(x) = 0$, for all $x \notin U$. So $g \in \theta(U).$

    Suppose $I \subseteq \theta(U)$. Let $x \in \tau(I)$, then $f(x) \neq 0$ for some $f \in I \subseteq \theta(U)$, which implies that $f(x) = 0,$ for all $x \notin U$, since $f(x) = \Sigma_{j \in J}\phi_j h_j(x)$ where $\phi_j \in C(X)$ and $h_j(x) = 0$ for all $x \notin U$. To avoid the contradiction $0 = f(x) \neq 0$, we have that $x \in U.$

\end{enumerate}
    
\end{proof}

Since every smooth manifold $M$ admits partition of unity subordinate to
a cover (second countable, Hausdorff and locally Euclidian implies paracompactivity), if $C^{\infty}(M)$ is the ring of all real-valued
smooth functions on $M$ the above proof and shows that:

\begin{proposition}
    Let $M$ be a smooth manifold, then
\begin{enumerate}
    \item The functor 
\begin{align*}
  \tau\colon \mathcal{I}(C^{\infty}(M))  & \to \mathcal{O}(M) \\
  I & \mapsto
  \bigcup_{f \in I}f^{-1}(\mathbb{R}- \{0\})
\end{align*}
preserves arbitrary supremum, multiplication, and unity.

    \item The functor 
\begin{align*}
  \theta\colon \mathcal{O}(M)  & \to \mathcal{I}(C^{\infty}(M) \\
  U & \mapsto
  \langle\{f \, : \, f\restriction_{X-U} \equiv 0 \}\rangle
\end{align*}
preserves arbitrary supremum
and unity.

    \item  The functor $\tau$ is left adjoint to $\theta$.
\end{enumerate}
\end{proposition}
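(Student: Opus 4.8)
The plan is to observe that the entire argument of Proposition \ref{prop:changebase_ring_continuous} transports verbatim to the smooth setting, because every step uses either only the pointwise algebra of real-valued functions or the existence of a partition of unity subordinate to an open cover. Since a smooth manifold is second countable, Hausdorff and locally Euclidean, it is paracompact and therefore admits a \emph{smooth} partition of unity subordinate to any open cover. So the strategy is to replay the three parts of the previous proof, checking at each step only that the functions produced remain in $C^{\infty}(M)$.

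First, for item (1), I would verify that $\tau$ preserves suprema, multiplication and unity exactly as before. The inclusion $\tau(\sum_j I_j) \subseteq \bigcup_j \tau(I_j)$ uses only that if $(f_1 + \dots + f_n)(x) \neq 0$ then some $f_j(x) \neq 0$, which is pointwise real arithmetic and indifferent to smoothness, while monotonicity of $\tau$ gives the reverse inclusion. For multiplication, $\tau(IJ) \subseteq \tau(I) \cap \tau(J)$ again follows from monotonicity, and the reverse inclusion uses that a product of smooth functions is smooth with $(fg)(x) = f(x)g(x) \neq 0$ whenever both factors are nonzero, so $fg \in IJ$ witnesses $x \in \tau(IJ)$. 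Preservation of unity holds because the constant function $1$ is smooth and nowhere zero, whence $\tau(C^{\infty}(M)) = M$.

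Next, for item (2), the only nontrivial inclusion is $\theta(\bigcup_i U_i) \subseteq \sum_i \theta(U_i)$, and this is precisely where smoothness must be checked. Given $g$ vanishing on $M - U$, I would choose a smooth partition of unity $\{\sigma_i\}_{i \in I}$ subordinate to $\{U_i\}_{i \in I}$ and set $g_i = g\,\sigma_i$; each $g_i$ is smooth as a product of smooth functions, vanishes off $U_i$, and $g = \sum_i g_i$ pointwise, placing $g$ in $\sum_i \theta(U_i)$. Preservation of unity is formal, as before. Finally, item (3), the adjunction $\tau \dashv \theta$, is a pointwise computation identical to the continuous case and uses no smoothness beyond what has already been established.

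The main obstacle — really the only genuine content — is the existence and smoothness of the partition of unity in item (2): I must invoke the standard differential-geometric theorem that a smooth manifold admits a smooth partition of unity subordinate to any open cover, and confirm that the resulting $g_i = g\,\sigma_i$ lie in $C^{\infty}(M)$, so that the decomposition takes place inside $\mathcal{I}(C^{\infty}(M))$ rather than merely in $\mathcal{I}(C(M))$. Every other step is insensitive to the distinction between continuous and smooth functions.
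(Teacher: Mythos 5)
Your proposal is correct and matches the paper's own treatment: the paper likewise justifies this proposition by observing that a smooth manifold is paracompact (second countable, Hausdorff, locally Euclidean) and hence admits smooth partitions of unity, so the proof of Proposition \ref{prop:changebase_ring_continuous} carries over verbatim. Your additional check that the functions $g_i = g\,\sigma_i$ remain in $C^{\infty}(M)$ is exactly the right point to isolate, even though the paper leaves it implicit.
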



\begin{proposition}\label{prop:both_precomp_preservesheaves}
 In the same conditions and notation of Proposition \ref{prop:changebase_ring_continuous}, we have that $- \circ \tau$  and    $- \circ \theta$ preserve sheaves.
\end{proposition}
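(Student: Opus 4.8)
The plan is to recognise that both $\tau$ and $\theta$ are the inverse-image parts of geometric morphisms and then to invoke the first step of Theorem \ref{theorem:change_of_basis}, where the construction $\phi_*(F) = F \circ f^*$ is shown to land in sheaves. That argument uses only two facts about $f^*$: that it preserves arbitrary sups (so that covers are sent to covers) and that it weakly preserves multiplication (so that compatible families stay compatible). Preservation of the unit and the existence of the right adjoint $f_*$ are part of the definition of a geometric morphism but are not otherwise needed for this particular step, so the whole proof reduces to checking that $\tau$ and $\theta$ qualify as inverse images.

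For $- \circ \tau$ there is essentially nothing left to do. Proposition \ref{prop:changebase_ring_continuous}(1) gives that $\tau$ preserves arbitrary suprema, the unit and the multiplication, and part (3) gives $\theta$ as its right adjoint. Hence $\tau$ is the inverse image of a (strong) geometric morphism, with the role of $\mathcal{Q}$ played by $\mathcal{O}(X)$ and that of $\mathcal{Q}'$ by $\mathcal{I}(C(X))$; applying the first step of Theorem \ref{theorem:change_of_basis} with $f^* = \tau$ shows that $F \circ \tau$ is a sheaf on $\mathcal{I}(C(X))$ for every sheaf $F$ on $\mathcal{O}(X)$.

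For $- \circ \theta$ the only genuine verification is weak preservation of multiplication, since Proposition \ref{prop:changebase_ring_continuous}(2) records only preservation of suprema and of the unit. Here I would use that $\theta(U)$ is exactly the ideal $\{f \in C(X) : f\restriction_{X \setminus U} \equiv 0\}$ of functions vanishing off $U$. For $U, V \in \mathcal{O}(X)$, a typical generator $ab$ of $\theta(U) \odot \theta(V)$, with $a \in \theta(U)$ and $b \in \theta(V)$, vanishes on $(X \setminus U) \cup (X \setminus V) = X \setminus (U \cap V)$, hence lies in $\theta(U \cap V)$; therefore $\theta(U) \odot \theta(V) \subseteq \theta(U \cap V)$. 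Recalling that the multiplication of the locale $\mathcal{O}(X)$ is $\cap$, this is precisely the weak inequality $\theta(U) \odot \theta(V) \leq \theta(U \cap V)$. Combined with preservation of suprema (which, via the Adjoint Functor Theorem noted after the definition of geometric morphism, also furnishes a right adjoint) and of the unit, this makes $\theta$ the inverse image of a geometric morphism, now with $\mathcal{Q} = \mathcal{I}(C(X))$ and $\mathcal{Q}' = \mathcal{O}(X)$; the first step of Theorem \ref{theorem:change_of_basis} with $f^* = \theta$ then shows that $G \circ \theta$ is a sheaf on $\mathcal{O}(X)$ for every sheaf $G$ on $\mathcal{I}(C(X))$.

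The only real content is the multiplication inequality for $\theta$, so I expect that to be the main (and only mild) obstacle; everything else is a matter of correctly matching $\mathcal{Q}$ and $\mathcal{Q}'$ to the two posets and citing the already-proved Theorem \ref{theorem:change_of_basis}. I would also flag that $\theta$ is in general not a \emph{strong} geometric morphism, since the reverse inclusion $\theta(U \cap V) \subseteq \theta(U) \odot \theta(V)$ need not hold, but the weak inequality is all that the cited step requires.
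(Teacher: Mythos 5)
Your proof is correct, and for $-\circ\tau$ it coincides with the paper's: both cite the first step of Theorem \ref{theorem:change_of_basis} with $f^*=\tau$. For $-\circ\theta$ you take a genuinely different (more structural) route: the paper does not recognise $\theta$ as an inverse image at all, but instead proves the inclusion $\theta(U_i)\odot\theta(U_j)\subseteq\theta(U_i\cap U_j)$ by expanding elements as sums $\bigl(\sum_k\phi_k p_k\bigr)\bigl(\sum_l\psi_l q_l\bigr)$ of generators and then re-runs the compatible-family/gluing argument by hand to verify directly that $G\circ\theta$ is a sheaf. You prove the same key inclusion --- and your simplification is valid, since $\{f : f\restriction_{X\setminus U}\equiv 0\}$ is already an ideal, so $\theta(U)$ equals it and the paper's generator bookkeeping is unnecessary --- but then you package $\theta$ as the inverse-image part of a second geometric morphism (sups and unit from Proposition \ref{prop:changebase_ring_continuous}(2), right adjoint from the Adjoint Functor Theorem, noting correctly that this right adjoint is not $\tau$, which is the \emph{left} adjoint of $\theta$) and invoke Theorem \ref{theorem:change_of_basis} a second time, which avoids duplicating the equalizer verification and makes explicit that only sup-preservation and weak multiplicativity are used. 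What each approach buys: yours is more economical and exposes the exact hypotheses at work; the paper's is self-contained and does not require checking that $\theta$ satisfies a definition it was never claimed to satisfy. One aside: your closing caveat that $\theta$ is ``in general not strong'' is harmless but likely false in this specific setting --- for $f\in\theta(U\cap V)$ write $f=f^+-f^-$ with $f^\pm\geq 0$ vanishing off $U\cap V$; then $f^\pm=\bigl(\sqrt{f^\pm}\bigr)^2$ with $\sqrt{f^\pm}\in\theta(U)\cap\theta(V)$, giving $\theta(U\cap V)\subseteq\theta(U)\odot\theta(V)$ --- though, as you say, only the weak inequality is needed.
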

\begin{proof}
    In the above proposition we proved that $\tau$ is the left adjoint in a geometric morphism given by the pair $\tau$ and $\theta$. By Theorem \ref{theorem:change_of_basis}, we have that if $F$ is a sheaf on $\mathcal{O}(X)$, then $F \circ \tau$ is a sheaf on $\mathcal{I}(C(X))$.

    Now, note that $\theta(U_i) \odot \theta(U_j) \subseteq \theta(U_i \cap U_j)$, since $g \in \theta(U_i) \odot \theta(U_j)$ is of the form $(\sum\limits_{k}\phi_k p_k)(\sum\limits_{l}\psi_l q_l)$ where $p_k(x) = 0, \forall x \notin U_i $ and $q_l(x) = 0, \forall x \notin U_j$. So $p_kq_l(x) = 0, \forall x \in (X \setminus U_i) \cup (X \setminus U_j) =  X \setminus(U_i \cap U_j)$.

    Thus, if $F$ is a sheaf on $\mathcal{I}(C(X))$ and $s_i \in  F \circ \theta(U_i)$ is such that $s_i|_{U_i \cap U_j} = s_j|_{U_i \cap U_j}$, for all $i, j \in I$, then $s_i|_{\theta(U_i) \odot \theta(U_j)} = s_j|_{\theta(U_i) \odot \theta(U_j)}$. So $s_i$ is a compatible family in $F$. Since $F$ is a sheaf, it admits unique gluing. Thus $F \circ \theta$ is a sheaf on $\mathcal{O}(X)$.
\end{proof}

Observe the following diagram 

\[\begin{tikzcd}[ampersand replacement=\&]
	{\mathcal{O}(X)} \& {\mathcal{I}(C(X))} \& {\mathcal{O}(X)} \\
	{Sh(X)} \& {Sh(C(X))} \& {Sh(X)} \\
	{PSh(X)} \& {PSh(C(X))} \& {PSh(X)}
	\arrow["\theta", from=1-1, to=1-2]
	\arrow["\tau", from=1-2, to=1-3]
	\arrow[hook, from=2-1, to=3-1]
	\arrow[hook, from=2-2, to=3-2]
	\arrow[hook, from=2-3, to=3-3]
	\arrow[shift right=1, from=2-1, to=2-2]
	\arrow[shift right=1, from=2-2, to=2-3]
	\arrow["{Lan_\theta}"', shift right=1, from=3-1, to=3-2]
	\arrow["{- \circ \tau}"', shift right=1, from=2-3, to=2-2]
	\arrow["{- \circ \theta}"', shift right=1, from=2-2, to=2-1]
	\arrow["{- \circ \theta}"', shift right=1, from=3-2, to=3-1]
	\arrow["{Lan_\tau}"', shift right=1, from=3-2, to=3-3]
	\arrow["{- \circ \tau}"', shift right=1, from=3-3, to=3-2]
\end{tikzcd}\]

Take $P$ a presheaf in $PSh(X)$, the left Kan extension of $P$ along $\theta$ is the following colimit (we are using the pointwise Kan extension  formula but applied to $\mathcal{O}(X)^{op}$ and $\mathcal{I}(C(X))^{op}$, since presheaves are contravariant functors)
\begin{align}\label{Lan_preservessheaves}
    (Lan_{\theta}P)(I) = \varinjlim\limits_{\theta(U) \supseteq I} P(U) = P(\tau(I)) = (P\circ \tau)(I) 
\end{align}
because $\tau$ is left adjoint to $\theta$. Since $\tau$ is a geometric morphism (Proposition \ref{prop:changebase_ring_continuous}), by Theorem \ref{theorem:change_of_basis} we have that $P\circ \tau$ is a sheaf. So $Lan_{\theta}$ also preserves sheaves. 

 Since the left Kan extension along $\theta$ is left adjoint to the precomposition with $\theta$, we have that $(- \circ \tau)$ is left adjoint to $(- \circ \theta)$. Let $P$ presheaf in $PSh(X)$ and $F$ sheaf in $Sh(C(X))$  
\begin{align*}
    Hom_{Sh(C(X))}(a_{C(X)} \circ (-\circ \tau)(P),F) &\cong   Hom_{PSh(C(X))}( (-\circ \tau)(P), j(F)) \\
    &\cong Hom_{PSh(X)}(P,(-\circ \theta)\circ j(F)) 
\end{align*}
By Proposition \ref{prop:both_precomp_preservesheaves}, $(- \circ \theta)$  preserves sheaves. Then we have
\begin{align*}
    Hom_{Sh(C(X))}((-\circ \tau)\circ a_X(P),F) &\cong   Hom_{Sh(C(X))}(Lan_\theta \circ a_X(P), F) \\
    & \cong Hom_{Sh(X)}(a_X(P), (-\circ \theta)(F))\\ &\cong Hom_{PSh(X)}(P,i \circ (-\circ \theta)(F)) 
\end{align*}

Notice that $i \circ (-\circ \theta)(F) = (-\circ \theta)\circ j(F)$. So $(-\circ \tau)\circ a_X$ and $a_{C(X)} \circ (-\circ \tau)$ are both left adjoint functors of the same functor. Therefore, $(-\circ \tau)\circ a_X \cong a_{C(X)} \circ (-\circ \tau)$ and we obtain that the following diagram commutes (up to natural isomorphism)
\[\begin{tikzcd}[ampersand replacement=\&]
	{Sh(X)} \&\&\&\& {Sh(C(X))} \\
	\\
	{PSh(X)} \&\&\&\& {PSh(C(X))} \\
	\&\& Set
	\arrow["i", shift left, hook, from=1-1, to=3-1]
	\arrow["j", shift left, hook, from=1-5, to=3-5]
	\arrow["{Lan_\theta = -\circ \tau}"', shift right, from=1-1, to=1-5]
	\arrow["{Lan_\theta = -\circ \tau}"', shift right, from=3-1, to=3-5]
	\arrow["{- \circ \theta}"', shift right, from=1-5, to=1-1]
	\arrow["{- \circ \theta}"', shift right=2, from=3-5, to=3-1]
	\arrow["{a_X}", shift left, from=3-1, to=1-1]
	\arrow["{a_{C(X)}}", shift left, from=3-5, to=1-5]
	\arrow["{const_{X}}", shift left, from=4-3, to=3-1]
	\arrow["{const_{C(X)}}"'{pos=0.7}, shift right, from=4-3, to=3-5]
\end{tikzcd}\]
Therefore, we proved:
\begin{corollary}\label{cor:constantsheafinCX}
   The constant sheaf in $Sh(C(X))$, denoted $\underline{K}^a_{C(X)}$, is naturally isomorphic to the composition $\underline{K}^a_{X} \circ \tau$ where $\underline{K}^a_{X}$ is a constant sheaf in $Sh(X)$. 
\end{corollary}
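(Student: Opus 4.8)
The plan is to obtain the statement as a short diagram chase through the two triangles and the square appearing in the diagram immediately preceding the corollary; essentially all the analytic content has already been established, so only bookkeeping remains. Recall that, by definition, a constant sheaf is the sheafification of a constant presheaf, so that $\underline{K}^a_X = a_X(const_X(K))$ and $\underline{K}^a_{C(X)} = a_{C(X)}(const_{C(X)}(K))$, where $const_X$ and $const_{C(X)}$ denote the constant-presheaf functors on $\mathcal{O}(X)$ and $\mathcal{I}(C(X))$, respectively.

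First I would verify the lower triangle, namely that precomposition with $\tau$ sends the constant presheaf on $X$ to the constant presheaf on $C(X)$:
\[
const_{C(X)}(K) = const_X(K) \circ \tau = (-\circ\tau)\big(const_X(K)\big).
\]
This is immediate, since the constant presheaf assigns the value $K$ to every object and the identity to every transition morphism; composing with the functor $\tau\colon \mathcal{I}(C(X)) \to \mathcal{O}(X)$ alters neither the values nor the transition maps. I would then invoke the commutativity (up to natural isomorphism) of the sheafification square established just above, namely $(-\circ\tau)\circ a_X \cong a_{C(X)}\circ(-\circ\tau)$, which rests on the identity $Lan_\theta = -\circ\tau$ from (\ref{Lan_preservessheaves}) together with Proposition \ref{prop:both_precomp_preservesheaves}. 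Chaining the triangle with the square yields
\[
\underline{K}^a_{C(X)} = a_{C(X)}\big(const_{C(X)}(K)\big) \cong a_{C(X)}\big((-\circ\tau)(const_X(K))\big) \cong (-\circ\tau)\big(a_X(const_X(K))\big) = \underline{K}^a_X \circ \tau,
\]
which is exactly the asserted isomorphism.

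Since the substantive steps---that $-\circ\tau$ preserves sheaves and that it commutes with sheafification---have already been discharged, I do not expect any real obstacle at the level of the corollary itself; the computation above is purely formal. The one point I would keep honest is that the displayed isomorphism is natural, which it is because each of $const_{(-)}$, $a_{(-)}$, and $-\circ\tau$ is a functor and the isomorphism coming from the square is a natural one; thus the composite isomorphism $\underline{K}^a_{C(X)} \cong \underline{K}^a_X \circ \tau$ is natural in $K$. Accordingly I would record the corollary simply as the functorial composite of the two triangles with the commuting square, without any further calculation.
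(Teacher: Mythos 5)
Your proposal is correct and is essentially identical to the paper's own proof: both unfold the constant sheaves as sheafifications of constant presheaves, observe that $const_{C(X)}(K) = const_X(K)\circ\tau$ since precomposition with $\tau$ changes neither the constant values nor the identity transition maps, and then apply the natural isomorphism $(-\circ\tau)\circ a_X \cong a_{C(X)}\circ(-\circ\tau)$ established in the discussion preceding the corollary. Your explicit remark on naturality is a small, harmless addition beyond what the paper records.
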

\begin{proof}
   We are using $\underline{K}^a_{X}$ to denote the sheafification of a constant presheaf $\underline{K}_{X}$. Explicitly, the commutativity of the diagram reads as follows:
   \begin{align*}
       \underline{K}^a_{C(X)} &= a_{C(X)} \circ const_{C(X)}(K) \\
       &= a_{C(X)}\circ(-\circ \tau) \circ const_{X}  (K) \\
       &\cong (-\circ \tau) \circ a_{X}\circ const_X (K) \\
       &=(-\circ \tau)(a_X(K))\\
       &= a_X(K)\circ \tau\\
       &= \underline{K}^a_{X}\circ \tau 
   \end{align*}
\end{proof}

\begin{remark}
    The above still holds for sheaves valued in the category of abelian groups and homomorphisms $Ab$ instead of the category of sets and functions $Set$.
\end{remark}

We believe that it is possible to use the above construction in other contexts. For instance, we could replace real-valued (continuous/smooth) functions with complex-valued (continuous/smooth) functions. We plan to check such change in a future work.

\section{\v{C}ech Cohomology}
Let $u$ be a cover in a quantale $Q$ and $F: Q^{op} \to Ab$ an abelian presheaf. 
\begin{definition}
The \textbf{\v{C}ech cochain complex} for $u$ with values in $F$ is given by $$C^q(u, F) = \prod_{i_0<...<i_q}F(u_{i_0} \odot \dots \odot u_{i_q})$$
 and the \textbf{coboundary morphism} $d^q: C^q(u,F)\to C^{q+1}(u,F)$ by $$(d^qi) = \sum^{q+1}_{k=0}(-1)^ki(\delta_k)|_{u_{i_0} \odot \dots \odot u_{i_q} \odot u_{i_{q+1}}},$$
where $\delta_k$ means that we removed the  $i_k$-entry.
\end{definition}

Observe that if $Q = (\mathcal{O}(X),\subseteq, \cap)$, then the definition  above is precisely the usual \v{C}ech cochain complex for a topological space $X$.

\begin{definition}
    
Given a sheaf $F$, we define the \textbf{\v{C}ech cohomology} for $u$ with values in $F$ by $$\mbox{\v{H}}^q(u, F) = \frac{Ker d^q}{Im d^{q-1}} $$
\end{definition}

This construction is about the covering and not about the actual space. When $Q = (\mathcal{O}(X),\subseteq, \cap)$, the topological space $X$ may admit many covers so this construction is not enough to talk about the cohomology of $X$, but only about the cohomology of a fixed cover of $X$. There is a way to define a \v{C}ech cohomology of $X$ and, analogously, we can define a \v{C}ech cohomology of a commutative ring with unity $R$. Before we get deeper into the theory, we explore the application we are interested in.

For convenience, we recall the pair of adjoint functors 
\begin{align*}
  \tau\colon \mathcal{I}(C(X))  & \to \mathcal{O}(X) \\
  I & \mapsto
  \bigcup_{f \in I}f^{-1}(\mathbb{R}- \{0\})
\end{align*}
and 
\begin{align*}
  \theta\colon \mathcal{O}(X)  & \to \mathcal{I}(C(X)) \\
  U & \mapsto
  \langle\{f \, : \, f\restriction_{X-U} \equiv 0 \}\rangle
\end{align*}

that we introduced in \ref{prop:changebase_ring_continuous}.

\begin{proposition}\label{prop:iso_of_cohomology_of_cover}
    Fix a cover $\mathcal{U}$ of $C(X)$. Then the \v{C}ech cohomology group for $\tau(\mathcal{U})$ with values in the constant sheaf $\underline{K}_{X}$ is isomorphic to the \v{C}ech cohomology group for $\mathcal{U}$   with values in the constant sheaf $\underline{K}_{C(X)}$. 
\end{proposition}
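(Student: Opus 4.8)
The plan is to build an isomorphism of cochain complexes degree by degree and to check that it commutes with the coboundary maps, since an isomorphism of complexes induces isomorphisms on cohomology in every degree. First I would record that $\mathcal{U} = \{I_i\}_{i\in I}$ being a cover of $C(X)$ means $\sum_{i\in I} I_i = C(X)$, the top element $1$ of $\mathcal{I}(C(X))$. Since $\tau$ preserves arbitrary suprema and unity (Proposition \ref{prop:changebase_ring_continuous}), we get $\bigcup_{i\in I}\tau(I_i) = \tau(\sum_{i\in I} I_i) = \tau(C(X)) = X$, so $\tau(\mathcal{U}) = \{\tau(I_i)\}_{i\in I}$ is genuinely a cover of $X$ and the complex $C^q(\tau(\mathcal{U}), \underline{K}_X)$ is well defined.

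Next I would compare the two complexes term by term. The summand of $C^q(\mathcal{U}, \underline{K}_{C(X)})$ indexed by $i_0 < \dots < i_q$ is $\underline{K}_{C(X)}(I_{i_0}\cdots I_{i_q})$, while the corresponding summand of $C^q(\tau(\mathcal{U}),\underline{K}_X)$ is $\underline{K}_X(\tau(I_{i_0}) \cap \dots \cap \tau(I_{i_q}))$. The key algebraic input is that $\tau$ preserves multiplication, i.e.\ it sends ideal products to intersections, so iterating gives $\tau(I_{i_0}\cdots I_{i_q}) = \tau(I_{i_0}) \cap \dots \cap \tau(I_{i_q})$. Combined with Corollary \ref{cor:constantsheafinCX}, which provides a natural isomorphism $\underline{K}_{C(X)} \cong \underline{K}_X \circ \tau$ (reading $\underline{K}$ as the sheafified constant sheaf), this yields
$$\underline{K}_{C(X)}(I_{i_0}\cdots I_{i_q}) \cong \underline{K}_X(\tau(I_{i_0}\cdots I_{i_q})) = \underline{K}_X(\tau(I_{i_0}) \cap \dots \cap \tau(I_{i_q})),$$
and hence a term-by-term isomorphism $C^q(\mathcal{U},\underline{K}_{C(X)}) \cong C^q(\tau(\mathcal{U}),\underline{K}_X)$ for every $q$.

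Finally I would check that these isomorphisms assemble into a map of complexes, which is where I expect the only real work to lie. The coboundary $d^q$ is an alternating sum of restrictions along the face inclusions obtained by deleting one index; on the ring side the relevant inclusion is $I_{i_0}\cdots I_{i_{q+1}} \subseteq I_{i_0}\cdots \widehat{I_{i_k}} \cdots I_{i_{q+1}}$, which $\tau$ carries to the corresponding inclusion of intersections on $X$. Because the isomorphism of Corollary \ref{cor:constantsheafinCX} is \emph{natural}, it commutes with the restriction maps of both sheaves, so each square between the $q$th and $(q+1)$th terms commutes; the alternating signs match because both complexes are indexed by the same ordered tuples. Thus the term-by-term isomorphisms commute with $d^q$, giving an isomorphism of cochain complexes and therefore isomorphisms $\check{H}^q(\tau(\mathcal{U}),\underline{K}_X) \cong \check{H}^q(\mathcal{U},\underline{K}_{C(X)})$ for all $q$. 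Everything else reduces to the two structural facts already established: $\tau$ preserves sups, unity and multiplication, and $\underline{K}_{C(X)} \cong \underline{K}_X \circ \tau$.
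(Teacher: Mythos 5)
Your proof is correct and follows essentially the same route as the paper: both arguments combine the natural isomorphism $\underline{K}_{C(X)} \cong \underline{K}_X \circ \tau$ from Corollary \ref{cor:constantsheafinCX} with the fact that $\tau$ sends ideal products to intersections (Proposition \ref{prop:changebase_ring_continuous}) to get a term-by-term identification of the cochain complexes, and then observe that the squares involving the coboundaries commute. If anything, you are more careful than the paper on two points it leaves implicit --- verifying that $\tau(\mathcal{U})$ is actually a cover of $X$, and spelling out that naturality of the isomorphism is what makes the diagrams commute.
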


\begin{proof}
By Corollary \ref{cor:constantsheafinCX},
\begin{align*}
        C^q(\mathcal{U},\underline{K}_{C(X)}) &= \prod\limits_{i_0<...<i_q}\underline{K}_{C(X)}(u_{i_0}\odot...\odot u_{i_q}) \\
      &\cong \prod\limits_{i_0<...<i_q}(\underline{K}_{X}\circ \tau) (u_{i_0}\odot...\odot u_{i_q}) \\
      &=  \prod\limits_{i_0<...<i_q} \underline{K}_{X} (\tau (u_{i_0}) \cap ... \cap \tau (u_{i_q}))\\
      &= C^q(\tau(\mathcal{U}),\underline{K}_{X})     
\end{align*}

Then we have commutative squares
\[\begin{tikzcd}[ampersand replacement=\&]
	\dots \& {C^{q-1}(\mathcal{U},\mathbb{Z}_{C(X)})} \& {C^q(\mathcal{U},\mathbb{Z}_{C(X)})} \& {C^{q+1}(\mathcal{U},\mathbb{Z}_{C(X)})} \& \dots \\
	\dots \& {C^{q-1}(\tau(\mathcal{U}),\mathbb{Z}_{X})} \& {C^q(\tau(\mathcal{U}),\mathbb{Z}_{X})} \& {C^{q+1}(\tau(\mathcal{U}),\mathbb{Z}_{X})} \& \dots
	\arrow["{d^{q}}", from=1-3, to=1-4]
	\arrow["{d^{q-1}}", from=1-2, to=1-3]
	\arrow["{d^{q-1}}"', from=2-2, to=2-3]
	\arrow["{d^{q}}"', from=2-3, to=2-4]
	\arrow[from=1-4, to=1-5]
	\arrow[from=2-4, to=2-5]
	\arrow[from=1-1, to=1-2]
	\arrow[from=1-2, to=2-2]
	\arrow[from=1-3, to=2-3]
	\arrow[from=1-4, to=2-4]
	\arrow[from=2-1, to=2-2]
\end{tikzcd}\]
Thus, we have an isomorphism of cochain complexes and so the cohomology groups are isomorphic: $\mbox{\v{H}}^q(\mathcal{U}, \underline{K}_{C(X)}) \cong  \mbox{\v{H}}^q(\tau(\mathcal{U}), \underline{K}_{X})$.
\end{proof}

Since the above holds for any cover $\mathcal{U}$ of $C(X)$, we expect that something at least similar will happen between the cohomology groups of $C(X)$ and $X$. First:

\begin{definition}
    Fix a quantale $Q$ and an element $u \in Q$. Let ${\cal{U}} = (u_i)_{i \in I}$ and ${\cal{V}} = (v_j)_{j \in J}$ be coverings of $u$. We say that ${\cal{U}}$ is a \textbf{refinement} of ${\cal{V}}$ if there is a function $r : I \to J$  and a morphism $u_i \to v_{r(i)}$, for all $i \in I.$  
\end{definition}

Given $r: I \to J$ that testifies ${\cal{U}}$ as refinement of ${\cal{V}}$, we have an induced morphism of cochain complexes $m_r: C^\bullet({\cal{V}}, F) \to  C^\bullet({\cal{U}}, F)$ and a corresponding morphism of \v{C}ech cohomology groups with respect to the coverings ${\cal U}$ and ${\cal V}$, $\check{m}_r : \check{\mathrm{H}}^\bullet({\cal V},F)  \to \check{\mathrm{H}}^\bullet({\cal U},F) $. Moreover, if $s : I \to J$ is another chosen function with respect to the refinement that testifies that ${\cal{U}}$ is a refinement of ${\cal{V}}$, then the induced morphisms of complexes $m_r, m_s$ are homotopic. Therefore, there is a unique induced morphism of cohomology groups $\check{m}_{{\cal U}, {\cal V}} : \check{\mathrm{H}}^\bullet({\cal V},F)  \to \check{\mathrm{H}}^\bullet({\cal U},F) $

Besides it, the class $\mbox{Ref}(u)$ of all coverings of $u$ is partially ordered under the refinement relation; this is a directed ordering relation. Thus, we can define: 
\begin{definition}
The \textbf{\v{C}ech cohomology group for an element} $u \in Q$ with values in a sheaf $F$ is the directed (co)limit\footnote{This (co)limit has to be taken with some set-theoretical care that we do not detail.} $$\check{\mathrm{H}}^q(u,F) :=   \varinjlim\limits_{{\cal U} \in \mbox{Ref}(u)}\check{\mathrm{H}}^q({\cal U},F) .$$
\end{definition}
\begin{theorem}\label{teo:iso_of_cohomology_of_space}
  The \v{C}ech cohomology group of $C(X)$ values in  $\underline{K}_{C(X)}$ is isomorphic to  the \v{C}ech cohomology group of  $X$ with values in  $\underline{K}_{X}$. 
\end{theorem}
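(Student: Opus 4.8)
The plan is to express both sides as directed colimits over refinement systems and to transport one colimit onto the other along the adjoint pair $(\tau,\theta)$ of Proposition \ref{prop:changebase_ring_continuous}. Writing $H^q_{C(X)}(\mathcal{U}) := \check{\mathrm{H}}^q(\mathcal{U},\underline{K}_{C(X)})$ and $H^q_X(\mathcal{V}):=\check{\mathrm{H}}^q(\mathcal{V},\underline{K}_X)$, and recalling that the unit of $\mathcal{I}(C(X))$ is $C(X)$ while the unit of $\mathcal{O}(X)$ is $X$, the definitions give
$$\check{\mathrm{H}}^q(C(X),\underline{K}_{C(X)}) = \varinjlim_{\mathcal{U}\in \mathrm{Ref}(C(X))} H^q_{C(X)}(\mathcal{U}), \qquad \check{\mathrm{H}}^q(X,\underline{K}_X) = \varinjlim_{\mathcal{V}\in \mathrm{Ref}(X)} H^q_X(\mathcal{V}).$$
First I would check that $\tau$ induces a functor on refinement systems. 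Since $\tau$ preserves arbitrary suprema and the unit, it sends a cover $\mathcal{U}=(u_i)_i$ of $C(X)$ to a family $\tau(\mathcal{U})=(\tau(u_i))_i$ with $\bigcup_i \tau(u_i)=\tau(\bigvee_i u_i)=\tau(C(X))=X$, hence a cover of $X$; and as $\tau$ is monotone, a refinement datum $r\colon I\to J$ with $u_i\le u'_{r(i)}$ gives $\tau(u_i)\le \tau(u'_{r(i)})$. Thus $\tau$ respects the refinement preorder and defines $T\colon \mathrm{Ref}(C(X))\to \mathrm{Ref}(X)$, $\mathcal{U}\mapsto \tau(\mathcal{U})$.

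Next I would promote the cover-wise isomorphism of Proposition \ref{prop:iso_of_cohomology_of_cover} to an isomorphism of diagrams. That proposition gives, for each $\mathcal{U}$, an isomorphism $H^q_{C(X)}(\mathcal{U})\cong H^q_X(\tau(\mathcal{U})) = (H^q_X\circ T)(\mathcal{U})$ arising from an isomorphism of cochain complexes, built pointwise from the natural isomorphism $\underline{K}_{C(X)}\cong \underline{K}_X\circ \tau$ of Corollary \ref{cor:constantsheafinCX} together with the identity $\tau(u_{i_0}\odot\cdots\odot u_{i_q})=\tau(u_{i_0})\cap\cdots\cap\tau(u_{i_q})$. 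The step to verify (a routine diagram chase) is that this isomorphism commutes with the refinement maps $m_r$, so that it is a natural isomorphism of functors $H^q_{C(X)}\Rightarrow H^q_X\circ T$ on the directed category $\mathrm{Ref}(C(X))$. Passing to colimits then yields $\check{\mathrm{H}}^q(C(X),\underline{K}_{C(X)})\cong \varinjlim_{\mathcal{U}} (H^q_X\circ T)(\mathcal{U})$.

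The crux is to identify this last colimit with $\varinjlim_{\mathrm{Ref}(X)} H^q_X$, for which it suffices that $T$ be a final functor; since $\mathrm{Ref}(C(X))$ is directed, it is enough that every comma category $\mathcal{V}/T$ be nonempty (connectedness following from directedness). Here I would use $\theta$: because $\theta$ preserves suprema and the unit, $\theta(\mathcal{V})=(\theta(V_j))_j$ is a cover of $C(X)$; the counit of $\tau\dashv\theta$ gives $\tau\theta(V_j)\le V_j$, while preservation of units gives $\bigcup_j\tau\theta(V_j)=\tau\theta(X)=X$. Hence $T(\theta(\mathcal{V}))=\tau\theta(\mathcal{V})$ is a cover of $X$ refining $\mathcal{V}$ via the identity reindexing, so $\mathcal{V}\preceq T(\theta(\mathcal{V}))$ and $\mathcal{V}/T$ is nonempty. (Under the partition-of-unity hypothesis one in fact has $\tau\theta=\mathrm{id}_{\mathcal{O}(X)}$, so $T\circ\theta_*=\mathrm{id}$ exactly.) Finality of $T$ then gives $\varinjlim_{\mathrm{Ref}(C(X))}(H^q_X\circ T)\cong \varinjlim_{\mathrm{Ref}(X)} H^q_X=\check{\mathrm{H}}^q(X,\underline{K}_X)$, and chaining with the isomorphism of the previous paragraph finishes the proof.

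The main obstacles I anticipate are exactly the naturality and the finality. One must confirm that the complex-level isomorphism of Proposition \ref{prop:iso_of_cohomology_of_cover} is compatible with all refinement maps, so that it descends to an isomorphism of directed systems rather than an unrelated family of cover-by-cover isomorphisms; and one must treat the set-theoretic subtlety flagged after the definition of $\check{\mathrm{H}}^q(u,F)$, namely that $\mathrm{Ref}(u)$ is a large directed preorder (with reindexing ambiguity in $r$), so the cofinality argument and the colimit should be carried out on a suitable small cofinal subsystem where the induced cohomology maps are already canonical.
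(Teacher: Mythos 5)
Your proposal is correct and follows essentially the same route as the paper's proof: the cover-wise isomorphism of Proposition \ref{prop:iso_of_cohomology_of_cover} is passed to the colimit over $\mathrm{Ref}(C(X))$, and the counit inequality $\tau\theta(V_i)\subseteq V_i$ together with preservation of suprema and units by $\tau$ and $\theta$ shows that the covers $\tau(\mathcal{U})$ are cofinal in $\mathrm{Ref}(X)$. Your version merely makes explicit what the paper leaves implicit (naturality of the cover-wise isomorphisms with respect to refinement maps, finality of the induced functor on refinement systems, and the set-theoretic care with the large directed preorder), which is a sound elaboration rather than a different argument.
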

\begin{proof}
First, observe that by Proposition \ref{prop:iso_of_cohomology_of_cover}:
\begin{align*}
    \check{\mathrm{H}}^q(C(X),\underline{K}_{C(X)}) &:=   \varinjlim\limits_{{\cal U} \in \mbox{Ref}(C(X))}\check{\mathrm{H}}^q({\cal U},\underline{K}_{C(X)}) \\
    &\cong \varinjlim\limits_{{\cal U} \in \mbox{Ref}(C(X))}\check{\mathrm{H}}^q(\tau({\cal U}),\underline{K}_{X}) 
\end{align*} 

Now, let $\mathcal{V}  = (V_i)_{i \in I}$ be a covering of $X$. It is clear that $\theta(V_i) \subseteq \theta(V_i)$. Since $\tau$ is left adjoint to $\theta$ we obtain $\tau(\theta(V_i)) \subseteq V_i$. Recall that $\theta$ and $\tau$ preserve supremum (Proposition \ref{prop:changebase_ring_continuous}), thus $\tau(\theta(V_i))$ is a covering of $X$ and then  $\tau(\theta(\mathcal{V}))$ it is a refinement of $\mathcal{V}$. In other words, for every $\mathcal{V}$ covering of $X$ there is a covering $\mathcal{U}$ of $C(X)$ such that $\tau(\mathcal{U}) \subseteq \mathcal{V}$.  Therefore
\begin{align*}
    \check{\mathrm{H}}^q(X,\underline{K}_{X}) &:=   \varinjlim\limits_{{\cal V} \in \mbox{Ref}(X)}\check{\mathrm{H}}^q({\cal V},\underline{K}_{X}) \\
    &\cong \varinjlim\limits_{{\cal U} \in \mbox{Ref}(C(X))}\check{\mathrm{H}}^q(\tau({\cal U}),\underline{K}_{X})
\end{align*}
\end{proof}
Once we defined sheaves on quantales and desire to make the notion of sheaves on rings similar to that of sheaves on topological spaces, it was straightforward how to define the correspondent \v{C}ech cohomology groups. However, it is not simple to actually calculate  the cohomology of an arbitrary ring, mainly because of two reasons: (i) we have to choose some sheaf but finding concrete examples of sheaves on a ring was not easy. We thought  about some presheaves examples that we could sheafify, but our sheafification process is too abstract. Even the behavior of the constant sheaf was not easy to capture; (ii) given an arbitrary ring, it may be difficult to understand what to expect from the covering of its ideals and we were not able to find studies about it in the literature. 

We solved those two difficulties: on one hand note that Theorem \ref{theorem:change_of_basis} is also a machine for producing sheaves on quantales. In particular, if we have a sheaf on a locale $L$ and a pair of adjoint functors 
\begin{tikzcd}[ampersand replacement=\&]
	{Q} \& {L}
	\arrow["{f_*}"', shift right=1, from=1-1, to=1-2]
	\arrow["{f^*}"', shift right=1, from=1-2, to=1-1]
\end{tikzcd}  then  $F \circ f^*$ is sheaf on the quantale $Q$. So we can use well-known sheaves on locales to create sheaves on quantales. Moreover, we hoped that if we had a ring that comes from a topological space, then we could indirectly calculate the cohomology of the ring  by calculating the cohomology of the space, which probably is already known since  \v{C}ech cohomology of a topological space is a topic that has been studied for a longer time. On the other hand, the above theorem is an example of such a phenomenon and, surprisingly or not, the proof relied basically on finding an adjoint pair of functors between the ideals of $C(X)$ and the open subsets of $X$, not depending of the elements of $C(X)$ or points of $X$. 

Besides it, Theorem \ref{teo:iso_of_cohomology_of_space} is not only about finding ways to calculate the \v{C}ech cohomology groups of $C(X)$, it is also about relating algebraic properties of $C(X)$ and topological properties of $X$: if $X$ is a compact manifold $n$ and class at least ${\cal C}^{n+1}$ then there is an isomorphism $H^q_{dR}(M) \cong \check{H}^q(M,\mathbb{R})$, for all $q \leq m$, where $H^q_{dR}$ denotes the de Rham cohomology groups and $\mathbb{R}$ is the constant sheaf with values in $\mathbb{R}$ (this is the De Rham Theorem. See for example  \cite[Appendix]{petersen2006riemannian}). While the dimension of $H^0_{dR}(M)$ corresponds to the  number of connected components of $X$, $X$ is connected if and only if $C(X)$ only has trivial idempotent elements ($0$ and $1$).  So, we believe that the dimension of the \v{C}ech cohomology group $\check{\mathrm{H}}^0(R,\underline{K}_{R})$ is related to the number of idempotent elements of a commutative ring with unity $R$.

The idea of investigating algebraic properties of $C(X)$ by analyzing topological properties of $X$ (and vice-versa) is not new (\cite{10.2307/1990875}, \cite{hewitt1948rings}, \cite{gillman2017rings}), but the cohomological approach is not usual: in fact,  we have found only one paper with such an approach. In \cite{watts1965alexander}, Watts creates a cohomology theory for a commutative algebra over a fixed algebra such that the case of the algebra of continuous real-valued functions on a compact Hausdorff space coincides with the  \v{C}ech cohomology of the space with real coefficients (the constant sheaf with values in $\mathbb{R}$). Our approach have the following main distinctions: 
\begin{itemize}
    \item we do not have to construct a new cohomology theory, we actually are expanding \v{C}ech cohomology in a quite natural manner;  
    \item our space $X$ does not have to be compact, but $X$ needs to admits partition of unity subordinate to a cover;
    \item we provide a general framework to investigate another algebro-geometric phenomenon that relies on finding ``good'' functors between the locale of open subsets of a space and the quantale of the ideals of a ring that arises from such space. Conversely, it is also possible to use the exact idea but starting with a ring and establishing a space that arises from such ring, as the process of taking the spectrum of a ring (then check if the functors between the respective quantales are good). 
    \item  Watts says that ``it is not easy to see how to remove the restriction to real coefficients, and we have made no attempt to do so''. In our case, it is clear how to proceed to change the coefficient, as we see below.   
\end{itemize}

\begin{theorem}\label{teo:iso_of_cohomology_of_space(for_any_coeff)}
  The \v{C}ech cohomology group of  $X$ with values in  $F$ is isomorphic to  the \v{C}ech cohomology group of $C(X)$ values in $F \circ \tau$.
\end{theorem}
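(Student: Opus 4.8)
The plan is to run the proof of Theorem \ref{teo:iso_of_cohomology_of_space} essentially verbatim, replacing the constant sheaf $\underline{K}_{X}$ by the arbitrary sheaf $F$ and the constant sheaf $\underline{K}_{C(X)}$ by $F \circ \tau$. The guiding observation is that the only feature of the constant sheaf used in the original argument was the natural isomorphism $\underline{K}_{C(X)} \cong \underline{K}_{X} \circ \tau$ furnished by Corollary \ref{cor:constantsheafinCX}; in the present setting this relation holds \emph{by construction}, since we declare the coefficient sheaf on $C(X)$ to be $F \circ \tau$. That $F \circ \tau$ is genuinely a sheaf on $\mathcal{I}(C(X))$ — so that the statement even parses — is exactly the content of Proposition \ref{prop:both_precomp_preservesheaves}. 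Thus the general coefficient case is in fact \emph{easier} than the constant one, because no analogue of Corollary \ref{cor:constantsheafinCX} is required.

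First I would establish the cover-level isomorphism, the analogue of Proposition \ref{prop:iso_of_cohomology_of_cover}. Fix a cover $\mathcal{U} = (u_i)_{i \in I}$ of $C(X)$. Since $\tau$ preserves the multiplication (Proposition \ref{prop:changebase_ring_continuous}), we have $\tau(u_{i_0} \odot \dots \odot u_{i_q}) = \tau(u_{i_0}) \cap \dots \cap \tau(u_{i_q})$ for every multi-index, and therefore
\begin{align*}
    C^q(\mathcal{U}, F \circ \tau) &= \prod_{i_0 < \dots < i_q} (F \circ \tau)(u_{i_0} \odot \dots \odot u_{i_q}) \\
    &= \prod_{i_0 < \dots < i_q} F\big(\tau(u_{i_0} \odot \dots \odot u_{i_q})\big) \\
    &= \prod_{i_0 < \dots < i_q} F\big(\tau(u_{i_0}) \cap \dots \cap \tau(u_{i_q})\big) \\
    &= C^q(\tau(\mathcal{U}), F).
\end{align*}
Because $\tau$ is a functor preserving these meets, the restriction map of $F \circ \tau$ along an inclusion $u_{i_0} \odot \dots \odot u_{i_{q+1}} \leq u_{i_0} \odot \dots \odot u_{i_q}$ corresponds under this identification to the restriction map of $F$ along $\tau(u_{i_0}) \cap \dots \cap \tau(u_{i_{q+1}}) \subseteq \tau(u_{i_0}) \cap \dots \cap \tau(u_{i_q})$. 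Hence the identification commutes with the coboundary operators $d^q$, giving an isomorphism of cochain complexes and therefore $\check{\mathrm{H}}^q(\mathcal{U}, F \circ \tau) \cong \check{\mathrm{H}}^q(\tau(\mathcal{U}), F)$ in every degree.

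Next I would pass to the directed colimit over refinements exactly as in Theorem \ref{teo:iso_of_cohomology_of_space}. Taking the colimit of the cover-level isomorphisms over $\mbox{Ref}(C(X))$ yields
\begin{align*}
    \check{\mathrm{H}}^q(C(X), F \circ \tau) &:= \varinjlim_{\mathcal{U} \in \mbox{Ref}(C(X))} \check{\mathrm{H}}^q(\mathcal{U}, F \circ \tau) \\
    &\cong \varinjlim_{\mathcal{U} \in \mbox{Ref}(C(X))} \check{\mathrm{H}}^q(\tau(\mathcal{U}), F).
\end{align*}
The final ingredient is the cofinality argument already used for the constant sheaf: for any cover $\mathcal{V} = (V_i)_{i \in I}$ of $X$, the adjunction $\tau \dashv \theta$ gives $\tau(\theta(V_i)) \subseteq V_i$, and since $\theta, \tau$ preserve suprema the family $\tau(\theta(\mathcal{V}))$ is again a cover of $X$ refining $\mathcal{V}$. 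Thus the covers of the form $\tau(\mathcal{U})$ with $\mathcal{U} \in \mbox{Ref}(C(X))$ are cofinal in $\mbox{Ref}(X)$, so the right-hand colimit above computes $\check{\mathrm{H}}^q(X, F)$, and combining the two displays gives the claimed isomorphism.

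I do not anticipate a genuine obstacle, as every ingredient is already assembled by the earlier results. The only points needing care are the same two flagged in the original proof: confirming that the degreewise bijection of the second paragraph is compatible with the coboundaries (so that one obtains an isomorphism of complexes rather than a mere bijection in each degree), which is immediate from functoriality of $\tau$; and the set-theoretic caution required to form the directed colimit indexed by the (proper) class of refinements, which is handled identically to the constant-coefficient case and is therefore no new difficulty here.
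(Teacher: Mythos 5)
Your proposal is correct and takes essentially the same route as the paper's own proof: the degreewise identification $C^q(\mathcal{U}, F\circ\tau) = C^q(\tau(\mathcal{U}), F)$, which rests on $\tau$ preserving the multiplication (Proposition \ref{prop:changebase_ring_continuous}), followed by the same refinement-cofinality argument via $\tau \dashv \theta$ already used in Theorem \ref{teo:iso_of_cohomology_of_space}. The additional details you supply --- that $F\circ\tau$ is a sheaf by Proposition \ref{prop:both_precomp_preservesheaves}, and that the identification commutes with the coboundary maps --- are points the paper leaves implicit, not deviations from its argument.
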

\begin{proof}
    Observe that 
    \begin{align*}
       C^q(\tau(\mathcal{U}),F) &=  \prod\limits_{i_0<...<i_q} F (\tau (u_{i_0}) \cap ... \cap \tau (u_{i_q}))\\
       &= \prod\limits_{i_0<...<i_q}(F\circ \tau) (u_{i_0}\odot...\odot u_{i_q}) \\
       &= C^q(\mathcal{U},F\circ \tau)
    \end{align*}
     
Then $\mbox{\v{H}}^q(\mathcal{U}, F \circ \tau ) \cong  \mbox{\v{H}}^q(\tau(\mathcal{U}), F)$.

By the same reasoning of the proof of Theorem \ref{teo:iso_of_cohomology_of_space}, $\check{\mathrm{H}}^q(C(X),F\circ \tau  ) \cong \check{\mathrm{H}}^q(X,F)$.
\end{proof}

\begin{remark}
    Note that we are using the unity/top elements $1$ and $1'$ because usually one talks about the \v{C}ech cohomology of a topological space $X$ and $X$ is the unity/top element of $(\mathcal{O}(X),\subseteq)$.
\end{remark}

This theorem gives generality but obscures how to interpret the result, since coefficients in different sheaves lead to different cohomology theories. De Rham Cohomology measures to which extent the Stokes Theorem fails; Singular homology measures the number of holes of $X$ and it is related to singular cohomology by the universal coefficient theorem for cohomology; now, without a specific sheaf in mind, is more difficult to have a first guess of what the \v{C}ech cohomology groups of a  ring are measuring.

Nevertheless, we have a result that generalizes the above phenomenon.

\begin{theorem}\label{teo:iso_cohomology_any_quantale}
    Consider a strong geometric morphism  
\begin{tikzcd}
	{(Q,\odot,1)} & {(Q',\odot',1')}
	\arrow["{f_*}"', shift right=1, from=1-1, to=1-2]
	\arrow["{f^*}"', shift right=1, from=1-2, to=1-1]
\end{tikzcd} such that $f_*$ preserves unity and arbitrary joins. Then  $\check{\mathrm{H}}^q(1',F\circ f^*  ) \cong \check{\mathrm{H}}^q(1,F)$.
\end{theorem}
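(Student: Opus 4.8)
The plan is to mirror the two-stage argument behind Theorem~\ref{teo:iso_of_cohomology_of_space(for_any_coeff)}: first establish a cover-by-cover isomorphism of \v{C}ech complexes, then pass to the directed colimit over refinements by means of a cofinality argument. Throughout, write $\mathcal{U} = (u_i)_{i\in I}$ for a covering of $1'$ in $Q'$ and set $f^*(\mathcal{U}) = (f^*(u_i))_{i \in I}$.

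\emph{Cover level.} First I would check that $f^*(\mathcal{U})$ is a covering of $1$: since $f^*$ preserves arbitrary joins and the unit, $\bigvee_i f^*(u_i) = f^*(\bigvee_i u_i) = f^*(1') = 1$. Because $f^*$ is a \emph{strong} geometric morphism it preserves the multiplication on the nose, so for every chain $i_0 < \dots < i_q$ one has $f^*(u_{i_0}) \odot \dots \odot f^*(u_{i_q}) = f^*(u_{i_0} \odot' \dots \odot' u_{i_q})$, whence
$$C^q(f^*(\mathcal{U}), F) = \prod_{i_0 < \dots < i_q} F\bigl(f^*(u_{i_0}) \odot \dots \odot f^*(u_{i_q})\bigr) = \prod_{i_0 < \dots < i_q}(F \circ f^*)(u_{i_0} \odot' \dots \odot' u_{i_q}) = C^q(\mathcal{U}, F \circ f^*).$$
As these identifications are compatible with the restriction maps along the quantale order (which $f^*$ is monotone for), and since the coboundary $d^q$ is built only from such restrictions, they assemble into an isomorphism of cochain complexes; hence $\check{\mathrm{H}}^q(\mathcal{U}, F \circ f^*) \cong \check{\mathrm{H}}^q(f^*(\mathcal{U}), F)$, naturally in $\mathcal{U}$.

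\emph{Colimit level.} Passing to the directed colimit over $\mathrm{Ref}(1')$ then yields $\check{\mathrm{H}}^q(1', F \circ f^*) \cong \varinjlim_{\mathcal{U} \in \mathrm{Ref}(1')} \check{\mathrm{H}}^q(f^*(\mathcal{U}), F)$, and it remains to identify the right-hand side with $\check{\mathrm{H}}^q(1, F) = \varinjlim_{\mathcal{V} \in \mathrm{Ref}(1)} \check{\mathrm{H}}^q(\mathcal{V}, F)$. Here I would invoke the extra hypotheses on $f_*$: given a covering $\mathcal{V} = (v_j)_{j\in J}$ of $1$, put $\mathcal{U} = (f_*(v_j))_{j\in J}$; since $f_*$ preserves joins and the unit, $\bigvee_j f_*(v_j) = f_*(1) = 1'$, so $\mathcal{U}$ covers $1'$. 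The counit of $f^* \dashv f_*$ gives $f^*(f_*(v_j)) \leq v_j$ for all $j$, so the identity $J \to J$ together with these inequalities exhibits $f^*(\mathcal{U})$ as a refinement of $\mathcal{V}$. Thus the coverings $f^*(\mathcal{U})$ for $\mathcal{U} \in \mathrm{Ref}(1')$ are cofinal in $\mathrm{Ref}(1)$, and $\mathcal{U} \mapsto f^*(\mathcal{U})$ is an order-preserving, cofinal map of the directed systems; cofinality identifies the two colimits and delivers $\check{\mathrm{H}}^q(1', F \circ f^*) \cong \check{\mathrm{H}}^q(1, F)$.

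The cover-level computation is routine, and it is worth noting precisely where each hypothesis enters: the strong (rather than merely weak) geometric morphism condition is exactly what turns the first step into an honest isomorphism instead of a one-sided restriction map, while the assumptions that $f_*$ preserve joins and the unit are precisely what powers the cofinality. The hard part will be the colimit comparison: I expect the main work to be verifying that $\mathcal{U} \mapsto f^*(\mathcal{U})$ is well defined and monotone on refinement classes, that the Part-one isomorphisms commute with the transition maps of the colimit (the naturality claimed above), and handling the same set-theoretic subtleties about the large directed system $\mathrm{Ref}$ already flagged in the footnote accompanying the definition of $\check{\mathrm{H}}^q(u, F)$.
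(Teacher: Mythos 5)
Your proposal is correct and takes essentially the same route as the paper's own proof: the cover-level identification $C^q(\mathcal{U}, F\circ f^*) = C^q(f^*(\mathcal{U}), F)$ via strong (exact) preservation of the multiplication together with join- and unit-preservation of $f^*$, followed by the cofinality argument in which $f_*$ (preserving joins and the unit) yields, for each covering $\mathcal{V}$ of $1$, a covering $f_*(\mathcal{V})$ of $1'$ with $f^*(f_*(\mathcal{V}))$ refining $\mathcal{V}$ by the counit inequality. If anything, you make explicit two points the paper leaves implicit --- the compatibility of the identifications with the coboundary maps and the counit step, which the paper handles by citing the analogous argument in Theorem \ref{teo:iso_of_cohomology_of_space}.
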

\begin{proof}
Consider a covering $\{u'_i\}_{i\in I} = \mathcal{U}'$ in $Q'.$ 
Then  

\begin{align*}
        C^q(\mathcal{U}',F\circ f^*) &= \prod\limits_{i_0<...<i_q}F\circ f^*(u'_{i_0}\odot'...\odot' u'_{i_q}) \\
      &=  \prod\limits_{i_0<...<i_q} F (f^* (u'_{i_0}) \odot ... \odot f^* (u'_{i_q}))\\
      &= C^q(f^*(\mathcal{U}'),F).
\end{align*}

Then $\check{\mathrm{H}}^q(\mathcal{U}',F\circ f^*  ) \cong \check{\mathrm{H}}^q(f^*(\mathcal{U}'),F).$

Observe that $f^*(f_*(\mathcal{U}))$ is a refinement of $\mathcal{U}$, by the same argument used in Theorem \ref{teo:iso_of_cohomology_of_space}. Thus, for every covering $\mathcal{U}$ of $1$ there is a covering $\mathcal{U}'$ of $1'$ such that $f^*(\mathcal{U}') \subseteq \mathcal{U}$. Then, $\varinjlim\limits_{{\cal U} \in \mbox{Ref}(1)}\check{\mathrm{H}}^q({\cal U},F) \cong \varinjlim\limits_{{\cal U}' \in \mbox{Ref}(1')}\check{\mathrm{H}}^q({\cal U}',F\circ f^*)$, as desired to obtain the result. 
\end{proof}

Again, the conclusion is that the isomorphism between \v{C}ech cohomology group relies exclusively on the properties of the geometric morphism between the quantales. The pair 
\begin{tikzcd}[ampersand replacement=\&]
	Q \& {Idem(Q)}
	\arrow["i"', shift right=1, from=1-2, to=1-1]
	\arrow["{(-)^-}"', shift right=1, from=1-1, to=1-2]
\end{tikzcd}
where $i$ is the inclusion and $(-)^-$ is the idempotent approximation is another pair of adjoint functors that satisfies the hypothesis of the above theorem, if $\bigvee_{i\in I} q_i^- = (\bigvee_{i\in I} q_i)^-$, for each $\{q_i : i \in I\} \subseteq Q$. Recall that a surjective ring homomorphism $f: R \to S$ induces a strong geometric morphism of quantales where $f^*(J) = f(J)$ and $f_*(K) = f^{-1}(K)$ (see Example \ref{strgeomorp-ex}). Consider the quotient map $q: R \to R/I$ defined by $q(r) = r+ I$. So, it remains to prove that $q_*$ induced by the pre-image preserves unity and supremum to have another class of examples to apply the above theorem.  Observe that for any ideal $K$ of $R/I$ we have, for some $J$ ideal of $R$, $$q_*(K) = q_*(q(J)) = q_*(\{j + I\,:\, j \in J\}) = \bigcup_{j\in J}(j+I) = J+I.$$
Then, $q_*(R/I) = q_*(q^*(R)) = I + R = R$ and
\begin{align*}
    q_*(\sum_{i=1}^{n}K_i) &= q_*(\sum_{i=1}^{n}q(J_i)) = q_*(q\sum_{i=1}^{n}(J_i))\\
    &=  I + \sum_{i=1}^{n}J_i = \sum_{i=1}^{n}(I+J_i)  \\ &= \sum_{i=1}^{n}q_*(q^*(J_i)) = \sum_{i=1}^{n}q_*(K_i),
\end{align*}

as desired.

Note that if $R$ and $S$ are Morita equivalent commutative rings, then $R$ and $S$ are isomorphic, providing another case of adjoint pair between the ideals of $R$ and the ideals of $S$, and thus an isomorphism between the cohomology of $R$ and the cohomology of $S$. Thus, our cohomology admits a Morita equivalence in the trivial case of commutative rings. We believe a next interesting application is to investigate the relation between the quantales of (bilateral) ideals of Morita equivalent non-commutative rings.

\section{Conclusions and Future works}

We have found an isomorphisms between the cohomology groups of the ring $C(X)$ and the cohomology groups of the topological space $X$ by identifying a pair of (adjoint) functors between the quantale of ideals of $C(X)$ and the locale of open subsets of $X$. A first next step is to determine if a similar construction for real-valued smooth (instead of continuous) functions still yields the desired properties, i.e, if we meet the conditions of Theorem \ref{teo:iso_cohomology_any_quantale}.  Similarly, we could explore replacing real-valued with complex-valued functions.

Additionally, we are interested in better understanding the paper \cite{borceux1994generic}, for two reasons: firstly, their functor construction of radical of a quantale (that gives the notion of radical of ideals if the quantale is the formed by ideals of a ring) establishes an adjunction that appears to fit the hypothesis of Theorem    
 \ref{teo:iso_cohomology_any_quantale}. While our construction began with a topological space and associated it with a ring, we may begin with a ring and then associate it with a topological space (radical ideals form a locale that is (localic) isomorphic to the open subsets of the spectrum of a ring). This leads to another interesting isomorphism between the cohomology groups of a ring $R$ and the cohomology groups of the space $Spec(R)$. Secondly, in the paper by Borceux and Cruciani  they also develop a notion of sheaves on a quantale; however, it is similar to what we understand as a $Q$-set.  In the localic case, the category of sheaves on a locale $L$ and the category of $L$-sets whose objects -- the $L$-sets -- are given by a set $A$ with an operation $\delta : A\times A \to L$ such that $\delta(a,b) = \delta(b,a) \mbox{ and }
    \delta(a,b) \wedge \delta(b,c) \leq \delta(a,c), \forall a,b,c \in A$, and  further conditions are satisfied to account for the gluing condition, are equivalent. Because of such equivalence, the authors coherently refer to their objects of sheaves. However, now that we have a structure that more closely resembles what we typically call a sheaf, it is natural to wonder if this equivalence between the categories still holds in the quantalic case.

    The above considerations and the results we proved are indications that we have expanded \v{C}ech cohomology in a meaningful way. Our long term goal is to use such expansion to use \v{C}ech cohomology in a wider range of well-established cohomology theories. 

\section{Acknowledgments}
This paper contains part of the thesis of Ana Luiza Tenório, which was funded by Coordenação de Aperfeiçoamento de Pessoal de Nível Superior (CAPES). Grant Number 88882.377949/2019-01. The main ideas were developed during her time at the Heinrich Heine University Düsseldorf, with financial support form CAPES. Grant Number 88887.694529/2022-00.  I am grateful for the support provided by the GRK 2240 (Algebro-Geometric Methods in Algebra, Arithmetic, and Topology) group at Düsseldorf for hosting me.




\bibliographystyle{abbrv} 
\bibliography{ArXiv_sh_q}

\end{document}